\theoremstyle{plain}
\newtheorem{theorem}{Theorem}[section]
\newtheorem{definition}[theorem]{Definition}
\newtheorem{Ques}{Question}
\begin{document}

\title[tracial topological rank zero]{{\bf The inheritance tracial topological rank zero for centrally  large subalgebra}}
\author{Qingzhai Fan}
\address{Qingzhai Fan\\ Department of Mathematics\\  Shanghai Maritime University\\
Shanghai\\China
\\ 201306 }
\email{fanqingzhai@fudan.edu.cn,
qzfan@shmtu.edu.cn}

\author{Xiaochun Fang}
\address{Xiaochun Fang\\ Department of Mathematics\\ Tongji
University\\
Shanghai\\China
\\200092 }
\email{xfang@mail.tongji.edu.cn}


\thanks{{\bf Key words} {${\rm C^*}$-algebras, centrally large subalgebra, tracial topological rank zero.}}
\thanks{2000 \emph{Mathematics Subject Classification\rm{:}} 46L35, 46L05, 46L80}

\begin{abstract}
Let $A$ be an infinite-dimensional unital simple $\rm C^{*}$-algebra such that $A$  has   $\mu$-$oz{\rm LLP}$ property
    	for any $\mu\in(0,1)$. Let $B\subseteq A$ be a centrally large
    	subalgebra of $A$ such that $B$ has  tracial topological rank zero. Then $A$ has  tracial topological rank zero.
\end{abstract}
\maketitle

\section{Introduction}

 Centrally large subalgebra was  introduced in \cite{AN} by  Archey  and  by Phillips as an abstraction of Putnam's orbit breaking subalgebra of the crossed product ${\rm C}^*(X,\mathbb{Z},\sigma)$ of the Cantor set by a minimal homeomorphism in \cite{P}. Whether the properties of centrally  large subalgebra can be preserved in the original algebra has
important applications in studying the properties of  crossed  product  ${\rm C^*}$-algebras which obtained by integer group $\mathbb{Z}$  action
  on compact topological spaces $X$ (\cite{AN}, \cite{AJN}, \cite{ML} and\cite{EN2}).

A crucial step in the classification of nuclear stably finite unital separable ${\rm C^*}$-algebras
 was Lin's  tracial toplogical rank ${\rm C^*}$-algebras (an abstract tracial approximation structure)  which inspired by Elliott-Gong's decomposition theorem  (\cite{E6}) and Gong's decomposition theorem (\cite{G1}) for simple AH algebras.

The following is a question given by Phillips.

\begin{Ques}
Let $A$ be an infinite-dimensional separable unital   simple $\rm C^{*}$-algebra.   Let $B\subseteq A$ be a centrally large subalgebra of $A$ such that $B$ has tracial topological rank zero. Does  $A$ has tracial topological rank zero?
\end{Ques}

 Let $\Omega$ be a class of ${\rm C^*}$-algebras. Inspired  by   centrally  large subalgebra and tracial approximation structure, Elliott, Fan and Fang
 in \cite{EFF} introduced   the class
 of ${\rm C^*}$-algebras which can be weakly  tracially approximated by ${\rm C^*}$-algebras in $\Omega$, and denote this class by ${\rm WTA}\Omega$. This  notion  generalizes  both Archey and Phillips's centrally large subalgebras and Lin's notion of tracial approximation.

 Let ${{\mathbb{F}}}$ denote the class of all finite dimension $\rm C^{*}$-algebras.  In  this paper, we will show the following result.

    	Let $A$ be an infinite-dimensional unital simple $\rm C^{*}$-algebra. Let $B\subseteq A$ be a centrally large	subalgebra of $A$ such that $B$ has tracial topological rank zero. Then  $A\in {\rm WTA}{\mathbb{F}}$.

As an application,  a partially positive answer is  given to Question 1.

Let $A$ be an infinite-dimensional unital simple $\rm C^{*}$-algebra such that $A$  has   $\mu$-$oz{\rm LLP}$ property
    	for any $\mu\in(0,1)$. Let $B\subseteq A$ be a centrally large
    	subalgebra of $A$ such that $B$ has  tracial topological rank zero. Then $A$ has tracial topological rank zero.

    Also one can easily  get the following result.

Let $A$ be an infinite-dimensional separable unital  nuclear simple $\rm C^{*}$-algebra such that $A$ satisfy the $\rm UCT$.   Let $B\subseteq A$ be a centrally large subalgebra of $A$ such that $B$ has tracial topological rank zero. Then $A$ has tracial topological rank zero.

\section{Preliminaries and definitions}
Let $A$ be a ${\rm C^*}$-algebra. For two positive elements $a,b\in A$ we say that $a$ is that $a$ is Cuntz subequivalent to $b$  and write  $a\precsim b$
if there is a sequence $(r_n)_{n=1}^\infty$
of elements of $A$ such that $$\lim_{n\to \infty}\|r_nbr_n^*-a\|=0.$$
 We know that Cuntz equivalent is an equivalence relation. We write $a\sim b$ and say $a$ and $b$ are Cuntz equivalent if $a\precsim b$ and $b\precsim a$.

 We denote by $\mathcal{I}^{(0)}$ the class of finite dimensional ${\rm C^*}$-algebra.

\begin{definition} {\rm (\cite{L2}.)}\label{def:2.2} A  unital simple  ${\rm C^*}$-algebra $A$ is said to
     have tracial topological rank zero if,  for any
 $\varepsilon>0$, any finite
subset $F\subseteq A$, and any  non-zero element $a\geq 0$, there
exist a non-zero projection $p\in A$,   a ${\rm C^*}$-subalgebra $B\in \mathcal{I}^{(0)}$  with
$1_B=p$ such that

$(1)$ $\|px-xp\|<\varepsilon$ for any $x\in F$,

$(1)$  $pxp\in_{\varepsilon} B$ for all $x\in  F$, and

$(2)$ $1-p\precsim a$.

\end{definition}
 If $A$ has tracial topological rank zero, we  write $TR(A)=0$.

Large and centrally  large subalgebras were introduced in \cite{P3} and \cite{AN} by Phillips and  Archey   as  abstractions of Putnam's orbit breaking subalgebra of the crossed product algebra ${\rm C}^*(X,\mathbb{Z},\sigma)$ of the Cantor set by a minimal homeomorphism in \cite{P}.

	\begin{definition}{\rm (\cite{AN}.)} \label{def:2.3}
		Let $A$ be an infinite-dimensional simple unital  $\rm C^{*}$-algebra. A unital subalgebra $B\subseteq A$ is said to be
		centrally large in $A$, if for every $m\in {\mathbb{N}}$,  any $a_{1}, a_{2}, \cdots, a_{m}\in A$, any $\varepsilon>0$, any $x\in A_{+}$,
		with $\|x\|=1$, and any $y\in B_{+}\backslash\{0\}$, there are $c_{1}, c_{2}, \cdots, c_{m}\in A$ and $g\in B$ such that
		
		$(1)$ $0\leq g\leq1$;
		
		$(2)$ For $j=1,2,\cdots,m$, we have $\|c_{j}-a_{j}\|<\varepsilon$;
		
		$(3)$ For $j=1,2,\cdots,m$, we have $(1-g)c_{j}\in B$;
		
		$(4)$ $g\precsim_{B}y$ and $g\precsim_{A}x$;
		
		$(5)$ $\|(1-g)x(1-g)\|>1-\varepsilon$;
		
		$(6)$ For $j=1, 2, \cdots, m$, we have $\|ga_{j}-a_{j}g\|<\varepsilon$.
	\end{definition}


Let $\Omega$ be a class of ${\rm C^*}$-algebras. Elliott, Fan, and Fang in \cite{EFF} introduced   the class
 of ${\rm C^*}$-algebras which can be weakly  tracially approximated by ${\rm C^*}$-algebras in $\Omega$, and denote this class by ${\rm WTA}\Omega$.

    \begin{definition}{\rm (\cite{EFF}.)}\label{def:2.4}
    	A simple unital $\rm C^{*}$-algebra $A$ is said to belong to the class ${\rm WTA}\Omega$ if for any $\varepsilon>0$,
    	any finite subset $F\subseteq A$, and non-zero element $a\geq 0$, there exist a projection $p\in A$, an element
    	$g\in A$, $0\leq g\leq1$, and a unital $\rm C^{*}$-algebra $B$ of $A$ with $g\in B$, $1_{B}=p$ and $B\in \Omega$ such that
    	
    	$(1)$ $(p-g)x\in_{\varepsilon}B$, $x(p-g)\in_{\varepsilon}B$ for all $x\in F$,
    	
    	$(2)$ $\|(p-g)x-x(p-g)\|<\varepsilon$ for all $x\in F$,
    	
    	$(3)$ $1-(p-g)\precsim a$, and
    	
    	$(4)$ $\|(p-g)a(p-g)\|\geq\|a\|-\varepsilon$.
    \end{definition}




The following definition is well-known.
    \begin{definition}\label{def:2.5}
    		Let $A$ be simple exact $\rm C^{*}$-algebra with $T(A)\neq\varnothing$. Then $A$ is said to have
    	strict comparison if for any $a, b\in(A\otimes K)_{+}$, condition
    	\begin{equation*}
    		d_{\tau}(a)<d_{\tau}(b)~~~\text{for all} ~~~\tau\in T(A),
    	\end{equation*}
    	implies that $a\precsim b$.
    \end{definition}

    \begin{definition}{\rm (\cite{NW}.)}\label{def:2.6}
    	Let $A$ be a simple $\rm C^{*}$-algebra with $T(A)\neq\varnothing$. We say $A$ is $a{\rm TAF}$ if,  it satisfies the following
    	condition: For every finite subset $\mathcal{F}\subseteq A$ and $\varepsilon>0$, there exist a finite dimensional
    	$\rm C^{*}$-algebra $F\subseteq A$ and a positive element $a\in F$ with $\|a\|\leq 1$ such that
    	
    	$(1)$ $\|ax-xa\|<\varepsilon$ for all $x\in\mathcal{F}$,
    	
    	$(2)$ $dist(a^{\frac{1}{2}}xa^{\frac{1}{2}},F)<\varepsilon$ for all $x\in\mathcal{F}$, and
    	
    	$(3)$ $\tau(a)>1-\varepsilon$ for all $\tau\in T(A)$.
    \end{definition}

 \begin{definition}{\rm (\cite{NW}.)}\label{def:2.7}
    	Let $A$ be a simple $\rm C^{*}$-algebra with $T(A)\neq\varnothing$. Let $\mu\in (0,1)$. We say $A$ is $\mu$-$a{\rm TAF}$ if it satisfies the following
    	condition: For every finite subset $\mathcal{F}\subseteq A$, there exist a finite dimensional
    	$\rm C^{*}$-algebra $F\subseteq A$ and a positive element $a\in F$ with $\|a\|\leq 1$ such that
    	
    	$(1)$ $\|ax-xa\|<\varepsilon$ for all $x\in\mathcal{F}$,
    	
    	$(2)$ $dist(a^{\frac{1}{2}}xa^{\frac{1}{2}},F)<\varepsilon$ for all $x\in\mathcal{F}$, and
    	
    	$(3)$ $\tau(a)>\mu$ for all $\tau\in T(A)$.
    \end{definition}

 \begin{definition}{\rm (\cite{W}.)}\label{def:2.8}
    	Let $A$ be a simple $\rm C^{*}$-algebra with $T(A)\neq\varnothing$. Let $\mu\in (0,1)$. We say $A$ is $\mu$-${\rm TAF}$, if it satisfies the following
    	condition: For every finite subset $\mathcal{F}\subseteq A$, there exist a finite dimensional
    	$\rm C^{*}$-algebra $F\subseteq A$ and a projection $p\in F$ with $1_F=p$ such that
    	
    	$(1)$ $\|px-xp\|<\varepsilon$ for all $x\in\mathcal{F}$,
    	
    	$(2)$ $dist(pxp,F)<\varepsilon$ for all $x\in\mathcal{F}$, and
    	
    	$(3)$ $\tau(a)>\mu$ for all $\tau\in T(A)$.
    \end{definition}

    \begin{definition}{\rm (\cite{NW}.)}\label{def:2.9}
        A $\rm C^{*}$-algebra $A$ has the $\mu$-$ozLLP$ (for some $\mu\in(0,1)$), if for every finite subset $\mathcal{F}\subseteq A$,
        $\varepsilon>0$, and isometric order zero map $\rho: A\rightarrow \prod_{k}F_{k}/\bigoplus_{k}F_{k}$, where $F_{k}\subseteq A$ are finite dimensional subalgebras and $\tau(x)=\lim_{k\rightarrow\infty}\tau(\rho_{k}(x))$: There exist $M\in{\mathbb{N}}$,
        a complete positive contractive  map $\phi:A\rightarrow \prod_{m>M}F_{m}$, and a positive element of norm one $b\in A$ such that $\phi(b)\neq0$
        and for all $x\in\mathcal{F}$,

        $(1)$ $dist(\phi(bx), \phi(b)'\cap\phi(A))<\varepsilon$,

        $(2)$ $\pi\phi(bx)\approx_{\varepsilon}\rho(bx)\approx_{\varepsilon}\rho(xb)\approx_{\varepsilon}\pi\phi(xb)$, and
        $\pi\phi(b)\approx_{\varepsilon}\rho(b)$, where $\pi:\prod_{m>M}F_{m}\rightarrow\prod_{m>M}F_{m}/\bigoplus_{m>M}F_{m}$
        is the quotient map, and

        $(3)$ $\tau(\phi_{m}(b))>\mu$ for all $\tau\in T(A)$ and $m>M$.

    \end{definition}

    \begin{theorem}{\rm (\cite{L2}.)}\label{thm:2.10}
    	Let $A$ be a separable simple unital  $\rm C^{*}$-algebra with  $TR(A)=0$. Then $A$ has strict comparison.
    \end{theorem}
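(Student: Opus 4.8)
The plan is to obtain strict comparison directly from the approximation of $A$ by finite-dimensional subalgebras, inside which Cuntz comparison of positive elements is completely determined by the ranks in the matrix blocks, hence by the traces. Since Definition \ref{def:2.5} concerns elements of $A\otimes K$, I would first reduce to $A$ itself: because $\bigcup_n M_n(A)$ is dense in $A\otimes K$ and $\|x-y\|<\eta$ forces $(x-\eta)_+\precsim y$, it is enough to treat $a,b\in M_n(A)_+$ for each fixed $n$; and since $TR(M_n(A))=0$ whenever $TR(A)=0$, with $T(M_n(A))$ consisting exactly of the amplifications of the traces in $T(A)$, I may replace $A$ by $M_n(A)$ and assume $a,b\in A_+$ with $d_\tau(a)<d_\tau(b)$ for all $\tau\in T(A)$.

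Next I would promote the pointwise inequality to a uniform gap after a cutoff. It suffices to prove $(a-\eta)_+\precsim b$ for every $\eta>0$ and then let $\eta\downarrow0$. Fix $\eta$ and write $a'=(a-\eta)_+$. Using the continuous functions $\tau\mapsto\tau(g_s(\,\cdot\,))$, where $g_s$ is $0$ at $0$, $1$ on $[s,\infty)$ and linear in between, one has $d_\tau(a')\le\tau(g_\eta(a))$, while suitable continuous functions of $b$ increase to $d_\tau(b)$ as the cutoff shrinks. Since $T(A)$ is compact and $d_\tau(a')<d_\tau(b)$ pointwise, a finite-subcover argument produces $\delta>0$ and $\kappa>0$ with
\begin{equation*}
d_\tau(a')+\kappa\;\le\;d_\tau\big((b-\delta)_+\big)\qquad\text{for all }\tau\in T(A).
\end{equation*}
I would then fix a nonzero projection $g\in A$ with $\tau(g)<\kappa/2$ for every $\tau$, which exists because a simple algebra with $TR(A)=0$ has projections of arbitrarily small trace.

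Now I would apply $TR(A)=0$ to the finite set $F=\{a',\,(b-\delta)_+,\,g\}$ with tolerance $\varepsilon\ll\kappa$ and control element $g$, obtaining a projection $p$ and a finite-dimensional $D\subseteq A$ with $1_D=p$ such that $p$ almost commutes with $F$, each $pxp$ is $\varepsilon$-close to $D$, and $1-p\precsim g$. From $\|pa'-a'p\|<\varepsilon$ I get $(a'-2\varepsilon)_+\precsim pa'p\oplus(1-p)a'(1-p)\precsim pa'p\oplus(1-p)$, while the almost-commutation together with $pa'p\precsim a'$ and $\tau(1-p)<\kappa/2$ gives, for every $\tau$, the estimates $d_\tau(pa'p)\le d_\tau(a')$ and $d_\tau\big(p(b-\delta)_+p\big)\ge d_\tau\big((b-\delta)_+\big)-\kappa/2$. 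Combined with the gap this yields $d_\tau(pa'p)+\kappa/2<d_\tau\big(p(b-\delta)_+p\big)$ for all $\tau$. Passing to the finite-dimensional representatives $\widehat{a'},\widehat{b}\in D$ and using that the traces of $A$ are faithful, and so detect the rank of $\widehat{a'},\widehat b$ in each block of $D$, the strict trace inequality forces $\mathrm{rank}\,\widehat{a'}\le\mathrm{rank}\,\widehat b$ blockwise; this gives $pa'p\precsim p(b-\delta)_+p$ and leaves in $D$ a complementary sub-block of trace at least $\kappa/2>\tau(1-p)$, which is the room in which the orthogonal remainder $1-p$ must be placed. Assembling with R\o rdam's subequivalence lemmas then gives $(a'-2\varepsilon)_+\precsim(b-\delta)_+\precsim b$; letting $\varepsilon\downarrow0$ and then $\eta\downarrow0$ gives $a\precsim b$.

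The main obstacle is exactly this last absorption step. The finite-dimensional comparison handles $pa'p$ cleanly, but the complementary corner $(1-p)A(1-p)$ lies outside $D$, so one cannot literally compare $1-p$ inside $D$ against the spare sub-block of $\widehat b$; what is really needed is the comparison of projections by traces in a $TR=0$ algebra, namely that $\tau(1-p)<\tau(e)$ for all $\tau$ forces $1-p\precsim e$, and this must be established (again via the tracial approximation, with $1-p$ taken Cuntz-small and the spare block of uniformly larger trace) rather than read off directly. The delicate point is therefore to choose the cutoffs $\eta,\delta$, the tolerance $\varepsilon$, and the control projection $g$ in the correct order relative to the single gap $\kappa$, so that the almost-commutation errors, the $\varepsilon$-approximations into $D$, and the passage between the traces of $A$ and the blockwise traces of $D$ all fit under that gap simultaneously.
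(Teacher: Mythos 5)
The paper offers no argument for Theorem \ref{thm:2.10}; it is quoted from Lin's book \cite{L2}, so your proposal can only be measured against the standard proof there. Your reductions (to $M_n(A)$, to $(a-\eta)_+$, and to a uniform gap $\kappa$ over the compact set $T(A)$) are sound and are indeed how such arguments begin. The genuine gap is the step where you pass to the representatives $\widehat{a'},\widehat{b}$ in the finite-dimensional algebra $D$ and assert that the inequality $d_\tau(\widehat{a'})+\kappa/2<d_\tau(\widehat{b})$, holding for all $\tau\in T(A)$, ``forces $\mathrm{rank}\,\widehat{a'}\le\mathrm{rank}\,\widehat{b}$ blockwise.'' It does not. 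Cuntz subequivalence in $D=\bigoplus_i M_{n_i}$ requires the rank inequality in \emph{every} block, whereas the traces of $A$ restricted to $D$ form only some compact subset of $T(D)$ and need not separate the blocks: faithfulness only guarantees that each block carries positive weight under each $\tau$, not that each extremal trace of $D$ is proportional to the restriction of a trace of $A$. One can have $\mathrm{rank}_1\widehat{a'}>\mathrm{rank}_1\widehat{b}$ compensated by a deficit in another block, so that every $\tau\in T(A)$ still sees a strict gap while $\widehat{a'}\not\precsim\widehat{b}$ in $D$. This is precisely the obstruction that prevents a one-step finite-dimensional comparison argument for tracially AF algebras; it is a more serious problem than the absorption of $1-p$ that you flag at the end.

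The way the literature circumvents this is not by refining the choice of $\varepsilon,\delta,\eta,g$ but by changing the route: Lin first shows that $TR(A)=0$ implies real rank zero, stable rank one, and weak unperforation of $K_0(A)$; for a simple weakly unperforated ordered group the order is determined by its states (an ordered-group theorem, not a tracial-approximation estimate), which yields comparison of projections by traces; and real rank zero then reduces Cuntz comparison of arbitrary positive elements to comparison of projections, giving Definition \ref{def:2.5}. If you want to keep a direct tracial-approximation argument, you must at least import the projection-comparison theorem as a black box (as you half-suggest in your final paragraph) --- but that theorem is essentially the content you were trying to establish in the finite-dimensional step, so the argument as written is circular rather than merely delicate.
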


  \begin{theorem}{\rm (\cite{FFZ1}.)}\label{thm:2.11}
    	Let $A$ be an infinite-dimensional unital simple $\rm C^{*}$-algebra. Let $B\subseteq A$ be a centrally large
    	subalgebra of $A$ such that $B$ has strict comparison. Then
    	$A$ has strict comparison.
        \end{theorem}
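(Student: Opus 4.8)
The plan is to verify the strict-comparison criterion of Definition~\ref{def:2.5} for $A$ directly, transporting the comparison problem into $B$ by means of the cutting element $g$ produced by central largeness and then invoking strict comparison in $B$. First I would record that a centrally large subalgebra is in particular large, so that the restriction map $T(A)\to T(B)$ is a bijection; consequently $d_\tau^A(c)=d^B_{\tau|_B}(c)$ for every $c\in B_+$, and a uniform inequality of dimension functions over $T(A)$ is equivalent to one over $T(B)$. After the standard reduction from $A\otimes K$ to matrix algebras (using that $M_n(B)$ is centrally large in $M_n(A)$ and that strict comparison is stable under amplification), it suffices to take $a,b\in A_+$ with $d_\tau(a)<d_\tau(b)$ for all $\tau\in T(A)$ and to prove $a\precsim_A b$. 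Passing to $(a-\varepsilon)_+$ and using compactness of $T(A)$ together with semicontinuity of the cut dimension functions, I would extract a uniform gap $\de'>0$ with $d_\tau((a-\varepsilon)_+)+2\de'<d_\tau(b)$ for all $\tau$.

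Next I would manufacture a small witness inside $B$: choose $y\in B_+\setminus\{0\}$ with $\sup_{\sg\in T(B)}d_\sg^B(y)<\de'$, which is possible because an infinite-dimensional simple $B$ contains, for each $N$, $N$ mutually orthogonal mutually Cuntz-equivalent nonzero positive elements, one of which has dimension function at most $1/N$. Applying Definition~\ref{def:2.3} to the set $\{a,b\}$, a small tolerance, an arbitrary norm-one $x\in A_+$, and this $y$, I obtain $g\in B$ with $0\le g\le 1$, approximately commuting with $a$ and $b$, satisfying $g\precsim_B y$ (so $d_\tau(g)<\de'$ for all $\tau$) and with $(1-g)a,(1-g)b$ within tolerance of $B$. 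Setting $\bar a=(1-g)^{1/2}a(1-g)^{1/2}$ and $\bar b=(1-g)^{1/2}b(1-g)^{1/2}$, approximate commutation places these within tolerance of positive elements $a',b'\in B$. The elementary cutting lemma for an approximately central contraction then yields $(a-\eta)_+\precsim_A \bar a\oplus g$ (with $\eta\to 0$ as the tolerance shrinks) together with the two-sided estimates $d_\tau(b)-d_\tau(g)\le d_\tau(\bar b)\le d_\tau(b)$ and $d_\tau(\bar a)\le d_\tau(a)$.

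Combining these estimates with the uniform gap, the trace identification, and the usual $\varepsilon$-cut perturbations, I would arrange that for all $\sg\in T(B)$, writing $\tau\in T(A)$ for its unique extension, one has $d_\sg^B(a')+d_\sg^B(g)<d_\tau(a)+\de'<d_\tau(b)-\de'\le d_\sg^B(b')$. Since $B$, and hence $M_2(B)$, has strict comparison, this gives $a'\oplus g\precsim_{M_2(B)} b'$, so that $a'\oplus g\precsim_{M_2(A)} b'\precsim_A b$. Chaining with the previous paragraph, $(a-\eta)_+\precsim_A\bar a\oplus g\precsim_{M_2(A)} a'\oplus g\precsim_{M_2(A)} b$; as $(a-\eta)_+$ and $b$ both lie in $A$, this descends to $(a-\eta)_+\precsim_A b$, and letting $\eta,\varepsilon\to 0$ yields $a\precsim_A b$.

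The hard part will be the cutting lemma that simultaneously produces $(a-\eta)_+\precsim_A\bar a\oplus g$ from approximate commutation and supplies the lower bound $d_\tau(\bar b)\ge d_\tau(b)-d_\tau(g)$, so that the strict inequality of dimension functions survives the transfer from $A$ to $B$ with enough room left over to absorb the defect $g$ into $b$. Keeping the several small parameters mutually consistent, so that the gap $2\de'$ genuinely dominates the tolerance, the cut level $\eta$, and the perturbation losses in passing from $\bar a,\bar b$ to $a',b'$, is the delicate bookkeeping; and the trace bijection between $T(A)$ and $T(B)$ is the structural input that makes the whole transfer legitimate, so I would establish it first.
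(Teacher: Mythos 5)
The paper does not actually prove Theorem~\ref{thm:2.11}; it is imported from \cite{FFZ1} with only a citation, so there is no in-text argument to compare against. That said, your outline is sound and is essentially the argument used in the cited reference (and in Phillips's treatment of the radius of comparison for large subalgebras): reduce to a single pair $a,b\in A_+$ with a uniform gap, cut by the approximately central element $g$ supplied by Definition~\ref{def:2.3} with the witness $y$ chosen Cuntz-small in $B$, push $(1-g)^{1/2}a(1-g)^{1/2}$ and $(1-g)^{1/2}b(1-g)^{1/2}$ into $B$, apply strict comparison there, and chain back. You have correctly isolated the two non-routine inputs. First, the surjectivity of the restriction map $T(A)\to T(B)$ is a genuine theorem of Phillips about large subalgebras (it is what lets you quantify over $T(B)$ using a hypothesis quantified over $T(A)$), and it must be quoted rather than re-derived; note also that Definition~\ref{def:2.5} tacitly assumes exactness so that traces rather than quasitraces suffice. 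Second, the uniform gap cannot be extracted from $d_\tau(a)<d_\tau(b)$ by compactness alone, since $\tau\mapsto d_\tau(a)$ is only lower semicontinuous; you need the standard sandwich $d_\tau\bigl((a-\varepsilon)_+\bigr)\le\tau\bigl(f_\varepsilon(a)\bigr)$ and $\tau\bigl(f_\eta(b)\bigr)\le d_\tau(b)$ by continuous functional calculus, with a finite subcover in $\eta$ --- you gesture at this correctly. The remaining steps ($(a-\eta)_+\precsim \bar a\oplus g$ from approximate commutation, $(x-\varepsilon)_+\precsim y$ when $\|x-y\|<\varepsilon$, descent of $\precsim$ from $M_2(A)$ to $A$ for elements of the corner) are standard Cuntz-comparison lemmas, and the only real work is the parameter bookkeeping you already flag. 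I see no gap in the plan.
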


The following theorem is well-known.
\begin{theorem}\label{thm:2.12}
    	Let $A$ be a separable simple unital  $\rm C^{*}$-algebra with  $TR(A)=0$. Then $A$ is  tracial $\mathcal{Z}$-stable.
    \end{theorem}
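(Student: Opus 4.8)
The plan is to verify tracial $\mathcal{Z}$-stability directly from its definition (tracial $\mathcal{Z}$-absorption in the sense of Hirshberg and Orovitz): for every finite subset $F\subseteq A$, every $\varepsilon>0$, every $n\in\mathbb{N}$, and every non-zero positive element $a\in A$, one must produce a completely positive contractive order zero map $\varphi\colon M_n\to A$ with $1-\varphi(1_{M_n})\precsim a$ and $\|\varphi(x)b-b\varphi(x)\|<\varepsilon$ for all $x\in M_n$ with $\|x\|\le 1$ and all $b\in F$. It therefore suffices to construct such maps $\varphi$.

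First I would use the hypothesis $TR(A)=0$ to absorb $F$ into a finite-dimensional subalgebra: applying Definition \ref{def:2.2} to $F$ and a small tolerance yields a non-zero projection $p$ and a finite-dimensional $C^*$-subalgebra $B=\bigoplus_i M_{m_i}\subseteq A$ with $1_B=p$ such that $\|pb-bp\|$ is small, $pbp\in_{\varepsilon}B$ for all $b\in F$, and $1-p\precsim a_0$ for a prescribed cut-down $a_0$ of $a$. The naive attempt to embed $M_n$ into the blocks $M_{m_i}$ fails to commute with $F$, since $F$ itself lies (approximately) inside the highly noncommutative algebra $B$, and the commutant of $F$ in $B$ is in general too small to contain a copy of $M_n$.

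The heart of the argument, and the step I expect to be the main obstacle, is therefore to manufacture a copy of $M_n$ with a large relative commutant. For this I would apply Definition \ref{def:2.2} a second time, now to the set $F$ together with a system of matrix units for $B$, obtaining a larger finite-dimensional subalgebra into which $B$ embeds approximately. Using simplicity and infinite-dimensionality of $A$, together with strict comparison (Theorem \ref{thm:2.10}) to realize projections of prescribed approximate trace, one arranges this second approximant to have matrix blocks that are large multiples of the block sizes of $B$; the multiplicities of the embedding $B\hookrightarrow$ (second approximant) can thus be taken $\geq n$, so that the relative commutant of $B$ contains unital copies of $M_n$. Composing with such an embedding gives an order zero map $\varphi\colon M_n\to A$ whose image approximately commutes with $B$, hence with $F$.

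It remains to control the defect $1-\varphi(1_{M_n})$. By construction this defect is dominated by $1-p$ together with the ``remainder'' corners coming from blocks whose size is not an exact multiple of $n$; choosing the tolerances and the cut-down $a_0$ so that $d_\tau(1-p)<\tfrac12 d_\tau(a)$, and arranging the block sizes large enough that the remainder contributes less than $\tfrac12 d_\tau(a)$ uniformly over $\tau\in T(A)$, one obtains $d_\tau\!\left(1-\varphi(1_{M_n})\right)<d_\tau(a)$ for all $\tau\in T(A)$. An appeal to strict comparison (Theorem \ref{thm:2.10}) then upgrades this strict inequality of dimension functions to the Cuntz subequivalence $1-\varphi(1_{M_n})\precsim a$, completing the verification of tracial $\mathcal{Z}$-absorption and hence the tracial $\mathcal{Z}$-stability of $A$.
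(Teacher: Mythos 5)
The paper does not actually prove Theorem \ref{thm:2.12}; it is stated as ``well-known'' with no argument (it is essentially due to Hirshberg and Orovitz, building on Lin's structure theory for $TR(A)=0$ algebras), so there is nothing internal to compare your proposal against. Your outline does follow the standard route: reduce to producing order zero maps $M_n\to A$ with approximately central image and small defect, obtain these from nested finite-dimensional approximants supplied by the tracial rank zero condition, and close with strict comparison. That is the right skeleton.

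There is, however, a genuine gap at exactly the step you flag as the main obstacle. Definition \ref{def:2.2} is an existence statement: applying it a second time to $F$ together with the matrix units of $B$ gives you \emph{some} finite-dimensional $C$ into which $B$ approximately embeds, but you have no control over the block sizes of $C$ or over the multiplicities of the embedding $B\hookrightarrow C$, and a block of multiplicity $<n$ contributes a remainder projection whose trace need not be small. Moreover, strict comparison (Theorem \ref{thm:2.10}) cannot by itself ``realize projections of prescribed approximate trace'': comparison only orders elements that already exist, and producing projections with trace approximately $t$ requires the additional facts that $TR(A)=0$ forces real rank zero and that the image of $K_0(A)$ in $\mathrm{Aff}(T(A))$ is dense (Lin). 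With those inputs one can divide the minimal projections of $B$'s image into $n$ mutually equivalent subprojections up to a remainder that is Cuntz-small, which is what actually manufactures the copy of $M_n$ in the approximate relative commutant with $d_\tau\bigl(1-\varphi(1_{M_n})\bigr)<d_\tau(a)$. Your final appeal to strict comparison is then legitimate, but as written the divisibility needed for the heart of the construction is asserted rather than derived, and it does not follow from the two ingredients (Definition \ref{def:2.2} and Theorem \ref{thm:2.10}) you invoke.
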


     \begin{theorem}{\rm (\cite{AJN}.)}\label{thm:2.13}
    	Let $A$ be an infinite-dimensional unital simple $\rm C^{*}$-algebra. Let $B\subseteq A$ be a centrally large
    	subalgebra in $A$ such that $B$ is tracial ${{\mathcal{Z}}}$-stable. Then
    	$A$ is $\mathcal{Z}$-stable.
        \end{theorem}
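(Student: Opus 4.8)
The plan is to verify that $A$ satisfies the Hirshberg--Orovitz tracial $\mathcal{Z}$-stability condition, and then to appeal to the known implication that, for simple unital $\rm C^{*}$-algebras, tracial $\mathcal{Z}$-stability entails genuine $\mathcal{Z}$-stability (Hirshberg--Orovitz). Thus, after fixing a finite set $F=\{a_{1},\dots,a_{m}\}\subseteq A$ of self-adjoint contractions, $\varepsilon>0$, a nonzero $a\in A_{+}$, and $n\in{\mathbb{N}}$, the goal is to produce a completely positive contractive order zero map $\varphi\colon M_{n}\to A$ with $1-\varphi(1)\precsim a$ and $\|\varphi(z)b-b\varphi(z)\|<\varepsilon$ for every $b\in F$ and every contraction $z\in M_{n}$.

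First I would use simplicity and infinite-dimensionality to split $\overline{aAa}$ into two orthogonal nonzero hereditary pieces, giving orthogonal nonzero positive $a',a''$ with $a'\oplus a''\precsim a$; I then fix a norm-one positive $x\in\overline{a'Aa'}$ and, using the largeness of $B$, a nonzero $b_{0}\in B_{+}$ with $b_{0}\precsim a''$. Applying the centrally large condition (Definition \ref{def:2.3}) to $a_{1},\dots,a_{m}$, a small tolerance, $x$, and $b_{0}$ produces $g\in B$ with $0\le g\le 1$ and $c_{1},\dots,c_{m}\in A$ such that $\|c_{j}-a_{j}\|$ is small, $(1-g)c_{j}\in B$, $g\precsim_{A}x\precsim a'$, and $g$ almost commutes with each $a_{j}$ (hence with each $c_{j}$). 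Next I apply the tracial $\mathcal{Z}$-stability of $B$ to the finite subset $\{g\}\cup\{(1-g)c_{j}\}\subseteq B$, a small tolerance, the element $b_{0}$, and the same $n$, obtaining a completely positive contractive order zero map $\psi\colon M_{n}\to B\subseteq A$ with $1_{B}-\psi(1)\precsim_{B}b_{0}$ and $\psi$ almost commuting with $g$ and with each $(1-g)c_{j}$.

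The decisive construction is the compression $\varphi_{0}(z):=(1-g)^{1/2}\psi(z)(1-g)^{1/2}$. Since $\psi$ almost commutes with $g$, the map $\varphi_{0}$ is approximately order zero and satisfies $\varphi_{0}(z)\approx(1-g)\psi(z)\approx\psi(z)(1-g)$, so a standard stability result for order zero maps out of the semiprojective cone over $M_{n}$ lets me perturb $\varphi_{0}$ to an exact completely positive contractive order zero map $\varphi$ with the same estimates up to arbitrarily small error. For the defect I would use the exact identity $1-\varphi_{0}(1)=g+(1-g)^{1/2}(1_{B}-\psi(1))(1-g)^{1/2}$, which exhibits the defect as a sum of two positive elements; since a sum of positives is Cuntz dominated by their direct sum, $1-\varphi_{0}(1)\precsim g\oplus(1_{B}-\psi(1))\precsim x\oplus b_{0}\precsim a'\oplus a''\precsim a$, and a perturbation lemma carries this bound over to $\varphi$.

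The hard part, and the reason for compressing by $1-g$ rather than transferring $\psi$ verbatim, is the norm commutation with the $a_{j}$. Expanding $\varphi_{0}(z)a_{j}-a_{j}\varphi_{0}(z)$ by writing $c_{j}=(1-g)c_{j}+gc_{j}$ and repeatedly using that $\psi$ almost commutes with $g$ and with $(1-g)c_{j}$, every term cancels except a residual term of the form $g(1-g)[\psi(z),c_{j}]$. The obstacle is that $[\psi(z),c_{j}]$ is genuinely not small in norm; however it is concentrated where $g$ is large, quantitatively $(1-g)[\psi(z),c_{j}]\approx 0$ (because $[\psi(z),c_{j}]\approx[\psi(z),gc_{j}]\approx g[\psi(z),c_{j}]$). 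The compression by $1-g$ is exactly what forces the factor $(1-g)$ in front of the surviving commutator, so $g(1-g)[\psi(z),c_{j}]=g\big((1-g)[\psi(z),c_{j}]\big)\approx 0$. Hence $\varphi$ almost commutes with each $a_{j}$, which completes the verification that $A$ is tracially $\mathcal{Z}$-stable, and the Hirshberg--Orovitz equivalence then gives $A\cong A\otimes\mathcal{Z}$.
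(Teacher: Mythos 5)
This statement is quoted in the paper from \cite{AJN} without proof, so your attempt can only be measured against the argument of \cite{AJN} itself. The core of your construction is essentially that argument: splitting $a$ into orthogonal nonzero pieces $a',a''$, producing a nonzero $b_{0}\in B_{+}$ with $b_{0}\precsim_{A}a''$ (note this step needs Phillips's lemma on large subalgebras from \cite{P3}, not just Definition \ref{def:2.3}, since the $g$ produced by the definition may be zero), applying the centrally large condition to get $g$ and the $c_{j}$, applying tracial $\mathcal{Z}$-stability of $B$ to $\{g\}\cup\{(1-g)c_{j}\}$, compressing by $(1-g)^{1/2}$, and perturbing the approximately order zero map to an exact one via projectivity of the cone over $M_{n}$. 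Your commutator mechanism is also correct: from $[\psi(z),(1-g)c_{j}]\approx 0$ and $[\psi(z),g]\approx 0$ one gets $[\psi(z),c_{j}]\approx g[\psi(z),c_{j}]$, so $(1-g)[\psi(z),c_{j}]\approx 0$, and the compression is exactly what inserts that factor. One technical point deserves more than ``a perturbation lemma'': Cuntz subequivalence is not preserved under small perturbations, so from $\|\varphi-\varphi_{0}\|<\eta$ you only get $(1-\varphi(1)-\eta)_{+}\precsim a$, and recovering $1-\varphi(1)\precsim a$ on the nose requires rescaling the order zero map by functional calculus on $\varphi(1)$ (or invoking the corresponding lemma of \cite{HO} and \cite{AJN}). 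That is fixable and standard.

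The genuine gap is your final step. The claim that ``for simple unital ${\rm C^*}$-algebras, tracial $\mathcal{Z}$-stability entails genuine $\mathcal{Z}$-stability (Hirshberg--Orovitz)'' is not the Hirshberg--Orovitz theorem: their equivalence, quoted in this paper as Theorem \ref{thm:2.14}, requires $A$ to be \emph{separable and nuclear}, and without nuclearity no such implication is known. The statement you are proving assumes only that $A$ is infinite-dimensional, simple and unital, so your argument establishes that $A$ is tracially $\mathcal{Z}$-stable, not that $A$ is $\mathcal{Z}$-stable. In fact, tracial $\mathcal{Z}$-absorption of $A$ is precisely the conclusion of the theorem in \cite{AJN}; the version printed here as Theorem \ref{thm:2.13}, with conclusion ``$A$ is $\mathcal{Z}$-stable'' and no nuclearity or separability hypotheses, overstates the cited result, and the paper only ever uses it (in the proof of its Theorem 3.3) together with Theorem \ref{thm:2.14} for nuclear separable $A$, where the upgrade is legitimate. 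So your construction is sound and matches the cited source up to, but not including, the last sentence; under the stated hypotheses that last implication cannot be obtained from your argument or from any known result.
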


\begin{theorem}{\rm (\cite{HO}.)}\label{thm:2.14}
    	Let $A$ be a separable simple unital nuclear  $\rm C^{*}$-algebra. The the following are equivalent.

     $(1)$ $A$ is $\mathcal{Z}$-stable, and

     $(2)$ $A$  is  tracial $\mathcal{Z}$-stable.

    \end{theorem}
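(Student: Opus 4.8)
The statement is the Hirshberg--Orovitz equivalence, so the plan is to prove the two implications separately. Throughout I use that tracial $\mathcal{Z}$-stability (tracial $\mathcal{Z}$-absorption) means: for every finite $F\subseteq A$, every $\ep>0$, every nonzero $a\geq 0$, and every $n\in\mathbb{N}$, there is a completely positive contractive order zero map $\varphi\colon M_n\to A$ with $\|\varphi(x)b-b\varphi(x)\|<\ep$ for all $b\in F$ and $\|x\|\leq 1$, and with $1-\varphi(1_{M_n})\precsim a$.

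For the easy direction $(1)\Rightarrow(2)$, I would work inside $A\cong A\otimes\mathcal{Z}$ and exploit the internal structure of $\mathcal{Z}$: for each $n$ and each $\ep$ the Jiang--Su algebra, being an inductive limit of dimension drop algebras, carries a completely positive contractive order zero map $\psi\colon M_n\to\mathcal{Z}$ whose complement $1-\psi(1_{M_n})$ has trace less than $\ep$ in the unique trace of $\mathcal{Z}$. Tensoring $1_A\otimes\psi$ into $A\otimes\mathcal{Z}$ produces maps that are exactly central, and the smallness of the trace of the complement, together with strict comparison (which a simple $\mathcal{Z}$-stable algebra enjoys by R{\o}rdam, in parallel with Theorem \ref{thm:2.10}), gives $1-(1_A\otimes\psi)(1_{M_n})\precsim a$ for any prescribed nonzero positive $a$. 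Transporting along the isomorphism $A\cong A\otimes\mathcal{Z}$ and absorbing its perturbation into the approximate commutation then yields the required maps; nuclearity is not needed here.

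The hard direction $(2)\Rightarrow(1)$ is the main content. The plan is to verify $\mathcal{Z}$-stability through the central sequence algebra $F(A)=A_{\omega}\cap A'$ (for a free ultrafilter $\omega$) using the R{\o}rdam--Winter criterion: $A$ is $\mathcal{Z}$-stable if and only if for every $n$ there is a unital $*$-homomorphism from the dimension drop algebra $Z_{n,n+1}$ into $F(A)$, equivalently a pair of completely positive contractive order zero maps $\varphi\colon M_n\to F(A)$ and $\psi\colon M_{n+1}\to F(A)$ with commuting ranges and $\varphi(1)+\psi(1)=1$. First I would feed the defining approximations of tracial $\mathcal{Z}$-absorption into a reindexing argument to obtain, for each $n$, a genuine order zero map $\Phi\colon M_n\to F(A)$ whose complement $1-\Phi(1_{M_n})$ is trace-small, meaning $\tau(1-\Phi(1_{M_n}))=0$ for every limit trace $\tau$ on $F(A)$. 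The remaining and hardest step is to promote two such order zero maps, for $M_n$ and $M_{n+1}$, into an honest unital copy of $Z_{n,n+1}$, that is, to correct the defect $\varphi(1)+\psi(1)\approx 1$ into an exact partition of unity. This is accomplished by property (SI): because tracial $\mathcal{Z}$-absorption forces strict comparison (the mechanism behind Theorem \ref{thm:2.10}), and because nuclearity guarantees that the traces on $F(A)$ are controlled by the limit traces coming from $T(A)$, a trace-small positive element can be absorbed by a trace-large one via an element $s$ with $s^{*}s$ equal to the small element and the large element acting as a unit on $s$.

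The principal obstacle is exactly this last absorption, namely producing the element $s$ witnessing property (SI). Here the interplay of simplicity, nuclearity (through the completely positive approximation structure that makes the tracial behaviour of the central sequence algebra tractable), and the strict comparison extracted from tracial $\mathcal{Z}$-absorption is essential, and it is the technical heart of \cite{HO}. Once property (SI) supplies the correction, the R{\o}rdam--Winter criterion applies verbatim, giving unital embeddings $Z_{n,n+1}\to F(A)$ for all $n$, whence $A\cong A\otimes\mathcal{Z}$.
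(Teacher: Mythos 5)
The paper offers no proof of this statement: Theorem \ref{thm:2.14} is imported verbatim from Hirshberg--Orovitz \cite{HO}, so the only benchmark is that original argument. Your outline reproduces it essentially exactly --- the forward direction via order zero maps $M_n\to\mathcal{Z}$ with trace-small complement combined with strict comparison, and the converse via trace-small order zero maps into the central sequence algebra, the property (SI) absorption step (where nuclearity enters), and the R{\o}rdam--Winter dimension-drop criterion --- so your proposal is correct as a sketch and follows the same route as the cited proof.
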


     \begin{theorem}{\rm (\cite{CETWW}.)}\label{thm:2.15}
    	Let $A$ be a separable simple unital $\rm C^{*}$-algebra. The the following are equivalent.

     $(1)$ $A$ is $\mathcal{Z}$-stable, and

     $(2)$ $A$ has finite  nuclear dimension (see \cite{WW3}).

    \end{theorem}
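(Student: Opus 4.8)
The plan is to establish the two implications separately, since each direction has an independent history and a rather different flavor. Note first that nuclearity is genuinely needed only for the forward implication: if $A$ has finite nuclear dimension then $A$ is automatically nuclear, so in $(2)\Rightarrow(1)$ nuclearity is a free hypothesis, whereas for $(1)\Rightarrow(2)$ one must assume $A$ nuclear as well (as in the cited result), since $\mathcal{Z}$-stability alone does not force nuclearity: for instance $C^*_{\rm r}(F_2)\otimes\mathcal{Z}$ is simple and $\mathcal{Z}$-stable but not nuclear. I would therefore argue the equivalence under the standing assumption that $A$ is nuclear, consistent with \cite{CETWW}.

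For $(2)\Rightarrow(1)$ I would follow Winter's route. Finite nuclear dimension supplies completely positive approximations $A\xrightarrow{\psi_n}F_n\xrightarrow{\varphi_n}A$ with $F_n$ finite dimensional and each $\varphi_n$ a sum of $d+1$ completely positive contractive order zero maps. The point is that this approximate factorization, together with simplicity, yields a plentiful supply of order zero maps from matrix algebras into $A$ with approximately central range and controlled trace. Using the comparison of positive elements furnished by these finite-dimensional approximations to manage the traces, one assembles a unital embedding of the Jiang--Su algebra $\mathcal{Z}$ (realized as a limit of prime dimension drop algebras $Z_{p,q}$ with $p,q$ coprime) into the central sequence algebra $A_\infty\cap A'$. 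By the Toms--Winter/R{\o}rdam--Winter characterization of $\mathcal{Z}$-stability, the existence of such an embedding gives $A\cong A\otimes\mathcal{Z}$, which is statement $(1)$.

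The hard direction is $(1)\Rightarrow(2)$, and here I would reproduce the strategy of \cite{CETWW}, whose output is in fact the sharp bound $\dim_{\rm nuc}A\le 1$. The aim is to produce, for the identity map on $A$, completely positive approximations factoring through finite dimensional algebras whose downward map decomposes into only two order zero summands. Two ingredients drive this. First, $\mathcal{Z}$-stability forces uniform property $\Gamma$ for the relevant tracial data, which in turn yields complemented partitions of unity (CPoU) in the uniform tracial completion: local approximations that are each accurate on a piece of the trace simplex can be glued into a single global approximation. Second, $\mathcal{Z}$-stability together with nuclearity lets one upgrade tracial ($2$-norm) estimates to genuine operator-norm estimates; concretely, one first builds good order zero approximations in the uniform $2$-norm via CPoU, and then transfers to the norm using the completely positive approximation property from nuclearity together with the absorption $A\cong A\otimes\mathcal{Z}$.

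The main obstacle, as in \cite{CETWW}, is the color count: finite-colored approximations are comparatively easy to obtain, but driving the number of order zero colors down to two requires the full CPoU machinery and a delicate interplay between norm and tracial approximations. Controlling this bridge between the C*-algebraic and the tracial von Neumann worlds---exactly where nuclearity and $\mathcal{Z}$-stability must cooperate---is where essentially all the work lies; the remaining bookkeeping that converts a two-colored approximation into the bound $\dim_{\rm nuc}A\le 1$, and hence into statement $(2)$, is then formal.
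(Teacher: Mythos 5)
The paper offers no proof of this statement at all: it is quoted as an external result with a citation to \cite{CETWW}, so there is no internal argument to compare yours against. Your outline is a faithful high-level account of how the equivalence is actually established in the literature: Winter's route for $(2)\Rightarrow(1)$ (order zero colourings of the completely positive approximations producing a unital copy of a dimension drop algebra in the central sequence algebra, then the R{\o}rdam--Winter characterization of $\mathcal{Z}$-stability), and the Castillejos--Evington--Tikuisis--White--Winter machinery of uniform property $\Gamma$ and complemented partitions of unity for $(1)\Rightarrow(2)$, with output $\dim_{\rm nuc}A\le 1$. Naturally this is a strategy sketch rather than a self-contained proof --- the details occupy several long papers --- but for a black-box citation that is the appropriate level of detail.

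Your most valuable observation is the one about hypotheses: you are right that the statement as printed is false without assuming $A$ nuclear. Finite nuclear dimension forces nuclearity, so $(2)\Rightarrow(1)$ costs nothing, but $(1)\Rightarrow(2)$ fails for non-nuclear algebras (your example $C^*_{\rm r}(F_2)\otimes\mathcal{Z}$, which is simple, separable, unital and $\mathcal{Z}$-stable but has infinite nuclear dimension, is exactly the right counterexample). The theorem in \cite{CETWW} carries the nuclearity hypothesis, and the paper should too; the omission happens to be harmless in the only place Theorem \ref{thm:2.15} is used (the proof of the paper's Theorem 3.3, where $A$ is assumed nuclear), but as an isolated statement it needs the extra hypothesis. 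One further small point worth recording: the infinite-dimensionality (non-elementarity) of $A$ is also needed, since a finite-dimensional matrix algebra has nuclear dimension zero but is not $\mathcal{Z}$-stable.
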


 \begin{theorem}{\rm (\cite{P3}.)}\label{thm:2.16}
    	Let $A$ be an infinite-dimensional unital simple $\rm C^{*}$-algebra. Let $B\subseteq A$ be a centrally large
    	subalgebra in $A$ such that $TR(B)=0$. Then
    	$A$ has stable rank one and real rank zero.
        \end{theorem}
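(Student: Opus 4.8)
The plan is to separate the two conclusions and route each through the regularity results already available. First I would record the consequences of $TR(B)=0$: by Lin's work \cite{L2}, a simple unital C*-algebra of tracial rank zero has real rank zero, stable rank one, and a nonempty trace space; in particular $B$ has strict comparison (Theorem~\ref{thm:2.10}) and is tracial $\mathcal{Z}$-stable (Theorem~\ref{thm:2.12}). The permanence results for centrally large subalgebras then transfer the relevant regularity to $A$: Theorem~\ref{thm:2.11} gives that $A$ has strict comparison, and Theorem~\ref{thm:2.13} gives that $A$ is $\mathcal{Z}$-stable. Since strict comparison presupposes $T(A)\neq\varnothing$, and since restriction of traces along $B\subseteq A$ is known to be a bijection $T(A)\to T(B)$ for a centrally large subalgebra, we get $T(A)\neq\varnothing$ from $T(B)\neq\varnothing$; hence $A$ carries a tracial state and is stably finite, so in particular not purely infinite.

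With this in hand the stable-rank-one conclusion is immediate: a simple, unital, stably finite, $\mathcal{Z}$-stable C*-algebra has stable rank one (R{\o}rdam). So I would spend the effort on real rank zero, which is \emph{not} a formal consequence of $\mathcal{Z}$-stability (for instance $\mathcal{Z}$ itself has real rank one). My approach would use the characterisation, valid for simple unital $\mathcal{Z}$-stable C*-algebras under the standard regularity hypotheses, that $A$ has real rank zero if and only if projections of $A$ separate $T(A)$ and the image $\rho_A(K_0(A))$ of the tracial pairing is uniformly dense in $\mathrm{Aff}(T(A))$. Both conditions I would obtain by pushing the corresponding facts for $B$ across the inclusion: real rank zero of $B$ supplies enough projections in $B$ to separate $T(B)$ and to make $\rho_B(K_0(B))$ dense in $\mathrm{Aff}(T(B))$; since every projection of $B$ is a projection of $A$ and $\tau\mapsto\tau|_B$ identifies $T(A)$ with $T(B)$, the same projections separate $T(A)$ and satisfy $\rho_B(K_0(B))\subseteq\rho_A(K_0(A))$ under the induced isomorphism $\mathrm{Aff}(T(A))\cong\mathrm{Aff}(T(B))$, forcing $\rho_A(K_0(A))$ to be dense as well.

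The hands-on alternative for real rank zero exposes where the difficulty lives. One would approximate a given self-adjoint $a$ by a finite-spectrum element directly: apply central largeness to $\{a\}$ (with the auxiliary positive elements chosen small) to produce an almost-central, Cuntz-small $g\in B$ with $g\precsim_A x$ and $c\approx a$ satisfying $(1-g)c\in B$, pass to a Cuntz-small projection $q\in B$ that approximately dominates $g$ and, after smoothing $g$ by continuous functional calculus, still almost commutes with $a$, and then split $a\approx(1-q)a(1-q)+qaq$. The first summand lies approximately in the corner $(1-q)B(1-q)$ and is handled by the real rank zero of $B$. The main obstacle is the corner term $qaq$: although $q$ is negligible in the Cuntz (trace) sense, its corner is \emph{not} negligible for the norm approximation that real rank zero demands, so one cannot discard $qaq$, and producing a finite-spectrum approximant inside the small corner $qAq$ threatens to be circular. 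I expect the clean way past this to be precisely the $K$-theoretic argument of the previous paragraph, which converts the norm statement into the trace-and-$K_0$ density statement that \emph{is} stable under tracially small cutting; verifying the hypotheses of that characterisation — where the separability, nuclearity, and $\mathrm{UCT}$ assumptions of the paper's final corollary enter — is the real content of the proof.
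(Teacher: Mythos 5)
The paper does not actually prove this statement; Theorem~\ref{thm:2.16} is quoted from Phillips's large-subalgebras preprint \cite{P3}, so there is no in-paper argument to compare yours against. Judged on its own terms, your strategy is coherent but it proves a weaker theorem than the one stated: every step of your route runs through regularity results that carry hypotheses the statement does not grant. Getting stable rank one from [$\mathcal{Z}$-stability $+$ stable finiteness $+$ R{\o}rdam] requires $A$ to be $\mathcal{Z}$-stable, and the passage from tracial $\mathcal{Z}$-stability of $B$ to genuine $\mathcal{Z}$-stability of $A$ (Hirshberg--Orovitz) needs $A$ separable and nuclear; likewise R{\o}rdam's criterion for real rank zero (density of $\rho_A(K_0(A))$ in $\mathrm{Aff}(T(A))$) is proved for simple, unital, \emph{exact}, finite, $\mathcal{Z}$-stable algebras. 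Theorem~\ref{thm:2.16} assumes only that $A$ is infinite-dimensional, unital and simple, and the source results in \cite{P3} and \cite{AN} are obtained by direct approximation arguments with no amenability or separability hypotheses. So, as a proof of the stated theorem, there is a genuine gap: you have silently added separability, nuclearity and exactness. This is not cosmetic, since the whole point of the paper's final corollary is that those extra hypotheses buy the stronger conclusion $TR(A)=0$.

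Two further specific points. First, the stable rank one half needs none of this machinery: the main theorem of \cite{AN} is precisely that if $B$ is centrally large in $A$ and $B$ has stable rank one, then $A$ has stable rank one; since $TR(B)=0$ forces $B$ to have stable rank one (Lin), that half is immediate. Second, in the real rank zero half, the step ``real rank zero of $B$ makes $\rho_B(K_0(B))$ dense in $\mathrm{Aff}(T(B))$'' needs more care than you give it: finite-spectrum approximation only shows that the real \emph{span} of $\{\hat p : p \text{ a projection}\}$ is dense, whereas R{\o}rdam's criterion concerns the subgroup $\rho_B(K_0(B))$; one must also invoke approximate divisibility of projections (true for $TR(B)=0$, e.g.\ via Zhang's halving theorem together with comparison of projections by traces, but it has to be said). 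On the positive side, your transfer of the density statement across the bijection $T(A)\to T(B)$ for (centrally) large subalgebras is correct and is the genuinely good idea in the write-up, and your diagnosis of why the naive corner-cutting argument fails for real rank zero --- the cut-out corner is tracially small but not norm-small --- is exactly right.
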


    \begin{theorem}{\rm (\cite{BBSTWW}.)}\label{thm:2.17}
    	Let $A$ be a separable simple unital $\rm C^{*}$-algebra with finite nuclear dimension. Suppose in addition that $A$ satisfy the $\rm UCT$. Then $A$ is stably finite if and only if the decomposition rank of $A$ is finite.
    \end{theorem}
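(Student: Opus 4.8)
The plan is to prove the two implications separately, reducing each to established structural theorems and to Theorem~\ref{thm:2.15}. Throughout I write $\dim_{\mathrm{nuc}}(A)$ for the nuclear dimension and $\mathrm{dr}(A)$ for the decomposition rank, and I use the standing inequality $\dim_{\mathrm{nuc}}(A)\le \mathrm{dr}(A)$, so that the hypothesis $\mathrm{dr}(A)<\infty$ is a genuine refinement of $\dim_{\mathrm{nuc}}(A)<\infty$ and the two regimes are compatible.

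First I would dispose of the softer direction, that finite decomposition rank forces stable finiteness; note that no appeal to the UCT is needed here. The key input is the theorem of Kirchberg and Winter that a $\rm C^{*}$-algebra of finite decomposition rank is quasidiagonal. Since $A$ is unital, quasidiagonality immediately yields stable finiteness: a unital quasidiagonal $\rm C^{*}$-algebra is finite, and quasidiagonality is inherited by all matrix amplifications $M_{n}(A)$, so each $M_{n}(A)$ is finite as well. This completes the implication $\mathrm{dr}(A)<\infty \Rightarrow A$ stably finite.

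The substantive direction is to deduce $\mathrm{dr}(A)<\infty$ from stable finiteness, and here the UCT hypothesis becomes essential. From $\dim_{\mathrm{nuc}}(A)<\infty$ together with Theorem~\ref{thm:2.15} I would first conclude that $A$ is $\mathcal{Z}$-stable. Because $A$ is simple, unital and nuclear (hence exact), stable finiteness guarantees a nonempty trace space $T(A)$ via the quasitrace results of Blackadar--Handelman combined with Haagerup's theorem that quasitraces on exact algebras are traces; simplicity then upgrades any such trace to a faithful one, so $A$ carries a faithful tracial state. At this point I would invoke the quasidiagonality theorem of Tikuisis, White and Winter: a separable nuclear $\rm C^{*}$-algebra satisfying the UCT has every faithful trace quasidiagonal, whence the unital algebra $A$ is itself quasidiagonal.

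It remains to pass from quasidiagonality back to a finite decomposition-rank bound, and this is the step I expect to be the main obstacle, since it rests on the full classification-era structure theory rather than on any elementary estimate. For a simple, separable, unital, nuclear, $\mathcal{Z}$-stable $\rm C^{*}$-algebra, quasidiagonality is exactly the extra ingredient that allows the completely positive approximations witnessing finite nuclear dimension to be replaced by approximately order-zero, approximately multiplicative ones witnessing finite decomposition rank; concretely I would cite the two-coloured machinery of \cite{BBSTWW}, which under precisely these hypotheses produces a uniform bound on $\mathrm{dr}(A)$. Combining the two implications yields the stated equivalence. The delicate point is that neither direction is self-contained: the forward direction conceals both the Tikuisis--White--Winter quasidiagonality theorem and the decomposition-rank estimate of \cite{BBSTWW}, while even the reverse direction uses the nontrivial Kirchberg--Winter result, so the proof is in essence an assembly of several deep theorems.
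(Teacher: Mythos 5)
The paper does not prove this statement at all: Theorem~\ref{thm:2.17} is imported verbatim from \cite{BBSTWW} and used as a black box, so there is no internal proof to compare against. Your reconstruction is essentially the derivation that the cited literature actually carries out, and both implications are assembled correctly: finite decomposition rank gives quasidiagonality (Kirchberg--Winter) and hence stable finiteness without any use of the UCT; conversely, finite nuclear dimension gives $\mathcal{Z}$-stability via Theorem~\ref{thm:2.15}, stable finiteness plus exactness gives a (faithful) tracial state via Blackadar--Handelman and Haagerup, the UCT feeds into the Tikuisis--White--Winter quasidiagonality theorem, and the two-coloured machinery converts this into a decomposition-rank bound. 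Two small points deserve flagging. First, what the decomposition-rank theorem of \cite{BBSTWW} actually consumes is quasidiagonality of \emph{every} trace, not quasidiagonality of the algebra $A$; this is harmless here because every trace on a simple unital $\rm C^{*}$-algebra is faithful, so Tikuisis--White--Winter applies to all of $T(A)$ at once, but your phrasing ("whence $A$ is itself quasidiagonal") points at a weaker conclusion than the one you need downstream. Second, the main theorems of \cite{BBSTWW} carry a standing hypothesis that $\partial_e T(A)$ is compact and finite-dimensional; the statement as quoted in the paper omits this restriction, which was only removed in later work (uniform property Gamma). Your proof inherits this gap exactly to the extent that the paper's citation does, so it is not a defect of your argument relative to the source, but it is worth being aware that the unrestricted statement requires post-\cite{BBSTWW} input.
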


    \begin{theorem}{\rm (\cite{W}.)}\label{thm:2.18}
    	Let $A$ be a separable simple unital  $\rm C^{*}$-algebra such that $A$ has finite decomposition rank and $A$ has real rank zero. Then $TR(A)=0$.
    \end{theorem}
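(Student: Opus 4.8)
The plan is to convert the two hypotheses into the standard regularity package---$\mathcal{Z}$-stability, stable rank one, real rank zero, strict comparison---and then to build the tracial finite-dimensional approximation of Definition~\ref{def:2.2} by hand. The overall shape of the argument is: finite decomposition rank feeds the nuclear-dimension/$\mathcal{Z}$-stability machinery already recorded in Theorem~\ref{thm:2.15} and Theorem~\ref{thm:2.14}, while real rank zero is what upgrades the resulting order-zero (``tracially $\mathcal{Z}$-stable'') approximations into honest finite-dimensional subalgebras with \emph{projection} unit.

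First I would record the structural consequences of finite decomposition rank. Since the nuclear dimension is dominated by the decomposition rank, $A$ has finite nuclear dimension, so by Theorem~\ref{thm:2.15} $A$ is $\mathcal{Z}$-stable. Finite decomposition rank also forces $A$ to be nuclear and quasidiagonal, hence stably finite, so that $T(A)\neq\varnothing$. A simple, separable, unital, stably finite, $\mathcal{Z}$-stable algebra has stable rank one and strict comparison of positive elements in the sense of Definition~\ref{def:2.5} (this comparison comes directly from $\mathcal{Z}$-absorption, not from Theorem~\ref{thm:2.10}, which is the reverse implication). Being nuclear and $\mathcal{Z}$-stable, $A$ is moreover tracially $\mathcal{Z}$-stable by Theorem~\ref{thm:2.14}.

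Next I would fix a finite set $F\subseteq A$, a tolerance $\varepsilon>0$, and a nonzero $a\geq 0$, and produce a projection $p$ and a finite-dimensional subalgebra $B$ with $1_{B}=p$ meeting conditions (1)--(3) of Definition~\ref{def:2.2}. Tracial $\mathcal{Z}$-stability supplies, for each large $n$, a c.p.c.\ order-zero map $\varphi\colon M_{n}\to A$ that is approximately central relative to $F$ and whose complement satisfies $\tau(1-\varphi(1))<\inf_{\tau\in T(A)}d_{\tau}(a)$ uniformly; this infimum is positive since $A$ is simple and $T(A)$ is compact. I would then use real rank zero to round this order-zero datum into a genuine subalgebra: replacing the positive elements $\varphi(e_{ij})$ by a system of matrix units built from projections in the hereditary subalgebra generated by $\varphi(1)$, I obtain $B\cong M_{n}$ with projection unit $p$ close to $\varphi(1)$. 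Stable rank one keeps $p$ almost central, giving $\|px-xp\|<\varepsilon$ and $pxp\in_{\varepsilon}B$ for $x\in F$, while strict comparison promotes $\tau(1-p)<d_{\tau}(a)$ to $1-p\precsim a$, yielding condition (3).

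The main obstacle will be precisely this rounding step: an order-zero image is not a subalgebra, and turning the approximately central, approximately multiplicative data coming from $\mathcal{Z}$-stability into a finite-dimensional subalgebra with \emph{projection} unit---without spoiling the approximation of $F$ or the comparison estimate on the complement---is the technical heart of the matter. This is exactly where real rank zero is indispensable: without an abundance of projections the order-zero approximations cannot be upgraded to subalgebras with projection unit, and it is precisely the ingredient separating tracial $\mathcal{Z}$-stability from tracial rank zero. I would carry it out by passing to the hereditary subalgebras cut down by the $\varphi(e_{ii})$, extracting projections via real rank zero and stable rank one, assembling the matrix units into $B$, and then reindexing the resulting almost-central tower back to a single finite-stage approximation inside $A$.
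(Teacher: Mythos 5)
The paper does not prove this statement at all: Theorem~\ref{thm:2.18} is imported as a black box from Winter's paper \cite{W}, so the only meaningful comparison is with Winter's own argument. Measured against that, your proposal has a genuine gap, and it sits exactly at the step you flag as ``the technical heart of the matter.'' Tracial $\mathcal{Z}$-stability (the Hirshberg--Orovitz property of Theorem~\ref{thm:2.14}) supplies, for each $n$, a c.p.c.\ order-zero map $\varphi\colon M_n\to A$ that is approximately central on $F$ and has $1-\varphi(1)\precsim a$; it supplies \emph{no} approximation of the elements of $F$ by the image of $\varphi$. Even if real rank zero let you round $\varphi$ perfectly into a $*$-homomorphism with projection unit $p$ and image $B\cong M_n$, you would have $\|px-xp\|<\varepsilon$ but nothing whatsoever forcing $pxp\in_{\varepsilon}B$: the corner $pAp$ is in general infinite-dimensional, and approximate centrality of $p$ places no constraint on the distance from $pxp$ to the finite-dimensional algebra $B$. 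Your sentence ``Stable rank one keeps $p$ almost central, giving $\|px-xp\|<\varepsilon$ and $pxp\in_{\varepsilon}B$'' is a non sequitur precisely at the second conclusion, which is condition $(1)$/$(2)$ of Definition~\ref{def:2.2} and is the entire content of tracial rank zero beyond tracial $\mathcal{Z}$-stability. This is not a repairable technicality within your framework: tracial $\mathcal{Z}$-stability plus real rank zero, stable rank one and strict comparison is not known to imply $TR(A)=0$ by any elementary rounding argument (and absent the UCT one cannot fall back on classification machinery either).

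The missing idea is the one your route deliberately discards. Winter works with the finite decomposition rank data itself: c.p.c.\ approximations $(F_k,\psi_k,\varphi_k)$ of $A$ through finite-dimensional algebras, with $\varphi_k$ decomposable into finitely many order-zero summands and, crucially, $\|\varphi_k\circ\psi_k(x)-x\|<\varepsilon$ for $x\in F$. It is this factorization property --- elements of $A$ are genuinely approximated by images of finite-dimensional algebras --- that, after the real-rank-zero discretization of the order-zero summands, produces a finite-dimensional subalgebra $B$ with $pxp\in_{\varepsilon}B$. By converting finite decomposition rank into finite nuclear dimension and then into $\mathcal{Z}$-stability via Theorem~\ref{thm:2.15}, you erase the approximation property and retain only a comparison/centrality property, from which condition $(2)$ of Definition~\ref{def:2.2} cannot be recovered. (Your preliminary reductions --- nuclearity, quasidiagonality and stable finiteness from finite decomposition rank, strict comparison and stable rank one from $\mathcal{Z}$-stability --- are all fine; the collapse happens only at the final assembly.)
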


 \begin{theorem}{\rm (\cite{NW}.)}\label{thm:2.19}
    	Let $A$ be a separable unital $\rm C^{*}$-algebra which is $a{\rm TAF}$ and  $A$ has the $\mu$-$oz{\rm LLP}$
    	for some $\mu\in(0,1)$. Then A is $\mu/2$-${\rm TAF}$.
    \end{theorem}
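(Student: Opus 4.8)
The plan is to prove the statement in three moves: first I would repackage the $a{\rm TAF}$ hypothesis as a single isometric order zero map of the shape demanded by the $\mu$-$oz{\rm LLP}$ property; then I would feed this map into the $\mu$-$oz{\rm LLP}$ property to obtain a completely positive contractive lift $\phi$ together with a distinguished positive element $b$ whose coordinate images have trace bounded below by $\mu$; and finally I would pass to a spectral projection of $\phi(b)$ cut at the level $\mu/2$, and verify that the corner by this projection witnesses the $\mu/2$-${\rm TAF}$ condition. Fix throughout a target finite set $\mathcal{G}\subseteq A$ and a tolerance $\varepsilon_0>0$ for which the $\mu/2$-${\rm TAF}$ conclusion is to be checked, and recall that $A$ is separable with $T(A)\neq\varnothing$.

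For the first move, I would exhaust a countable dense subset of $A$ by finite sets $\mathcal{F}_1\subseteq\mathcal{F}_2\subseteq\cdots$ and apply Definition \ref{def:2.6} with tolerances $1/k$ to produce finite dimensional subalgebras $F_k\subseteq A$ and positive contractions $a_k\in F_k$ satisfying $\|a_kx-xa_k\|\to0$, $dist(a_k^{1/2}xa_k^{1/2},F_k)\to0$, and $\tau(a_k)\to1$. Writing $\rho_k$ for the map sending $x$ to the best approximant in $F_k$ of $a_k^{1/2}xa_k^{1/2}$, the approximate commutation makes $\rho:=(\rho_k)$ order zero on the quotient $\prod_kF_k/\bigoplus_kF_k$ (because $xy=0$ forces $a_k^{1/2}xa_kya_k^{1/2}\to0$), condition (2) keeps it valued in $\prod_kF_k$, and a Cauchy--Schwarz estimate with $\tau(1-a_k)\to0$ gives the trace compatibility $\tau(\rho_k(x))\to\tau(x)$. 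That this data can be organized into a genuinely isometric order zero map is the standard order-zero reformulation of the $a{\rm TAF}$ property, and the resulting $\rho$ is precisely an admissible input for Definition \ref{def:2.9}.

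For the second and third moves, I would apply the $\mu$-$oz{\rm LLP}$ property to $\rho$ with the finite set $\mathcal{F}\supseteq\mathcal{G}$ and a small tolerance $\eta$ to be fixed at the end, obtaining $M\in{\mathbb{N}}$, a completely positive contractive map $\phi:A\to\prod_{m>M}F_m$, and $b\in A_+$ with $\|b\|=1$, $\phi(b)\neq0$, satisfying conditions (1)--(3). Since $\phi(b)$ is a positive contraction in $\prod_{m>M}F_m$, its spectral projection $P:=\chi_{(\mu/2,\,1]}(\phi(b))=(p_m)_{m>M}$ has each coordinate $p_m=\chi_{(\mu/2,\,1]}(\phi_m(b))\in F_m$ a genuine projection commuting with $\phi_m(b)$. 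Decomposing $\phi_m(b)=\phi_m(b)p_m+\phi_m(b)(1-p_m)$ with $\phi_m(b)(1-p_m)\leq(\mu/2)(1-p_m)$ and using $\tau(\phi_m(b))>\mu$ yields $\mu<\tau(\phi_m(b))\leq\tau(p_m)+\mu/2$, hence $\tau(p_m)>\mu/2$ for every $m>M$; this is condition (3) of $\mu/2$-${\rm TAF}$.

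It remains to verify conditions (1) and (2) of Definition \ref{def:2.8} for the finite dimensional algebra $F:=p_mF_mp_m\subseteq A$ with unit $p_m$, and here lies the \emph{main obstacle}. Condition (1) of the $\mu$-$oz{\rm LLP}$ property supplies, for each $x\in\mathcal{G}$, an element $y\in\phi(b)'\cap\phi(A)$ with $\|\phi(bx)-y\|<\eta$; since each $y_m\in F_m$ commutes with $\phi_m(b)$ it commutes exactly with the spectral projection $p_m$, so $\|[P,\phi(bx)]\|<2\eta$ and therefore $\|[p_m,\phi_m(bx)]\|<2\eta$ in each coordinate. The difficulty is to descend from $\phi_m(bx)$ back to $x$ and to transfer commutation to the projection itself: the spectral cut $P$ is not a norm-continuous function of $\phi(b)$, so approximate commutation of $\phi(b)$ does not automatically pass to $P$, and the weight $b$ must be removed. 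I would resolve both points using the quotient and trace identities in condition (2) together with the Winter--Zacharias structure theorem for order zero maps, writing $\rho=h\,\sigma$ with $\sigma$ a $*$-homomorphism and $h=\rho(1)$ central, so that $\rho(bx)=\rho(b)\sigma(x)$ and the weight cancels on the support of $\rho(b)$; pushing these identities through the comparison $\pi\phi\approx\rho$ and choosing $\eta$ small and the index $m$ large then gives $\|[p_m,x]\|<\varepsilon_0$ and $dist(p_mxp_m,F)<\varepsilon_0$ for all $x\in\mathcal{G}$. Controlling the discontinuity of the spectral cut and the de-weighting simultaneously is the technical heart of the argument, and is exactly what the $\mu$-$oz{\rm LLP}$ property was designed to make possible.
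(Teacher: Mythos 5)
First, a framing remark: the paper does not prove this statement at all --- Theorem \ref{thm:2.19} is imported verbatim from Watson's thesis \cite{NW} as background, so there is no in-paper proof to compare against. Your attempt therefore has to stand on its own. Its first half is sound and almost certainly follows the intended strategy: the diagonal construction turning the $a{\rm TAF}$ data into an order zero map $\rho$ into $\prod_k F_k/\bigoplus_k F_k$ with $\tau(\rho_k(x))\to\tau(x)$ is standard (though isometry is not automatic from the construction and you assert it rather than prove it; it can be extracted from simplicity of $A$ together with $\|\rho(1)\|=\limsup_k\|a_k\|=1$ and the ideal structure of $C_0((0,1])\otimes A$). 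Your trace estimate $\mu<\tau(\phi_m(b))\le\tau(p_m)+\mu/2$, giving condition (3) of Definition \ref{def:2.8}, is correct, as is the observation that an element commuting exactly with $\phi(b)$ commutes exactly with each coordinatewise spectral projection $p_m$, whence $\|[p_m,\phi_m(bx)]\|<2\eta$.

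The genuine gap is that conditions (1) and (2) of Definition \ref{def:2.8} --- which are the entire content of the theorem --- are never established; your final paragraph names the obstacle accurately and then discharges it by assertion (``choosing $\eta$ small and the index $m$ large then gives $\|[p_m,x]\|<\varepsilon_0$''). The problem is not routine bookkeeping. All the control you possess concerns the images $\phi_m(bx)$: you know $[p_m,\phi_m(bx)]$ is small and that $\phi_m(bx)\approx\rho_m(bx)\approx a_m^{1/2}bxa_m^{1/2}$ for large $m$, but the definition demands estimates on $\|[p_m,x]\|$ and on $dist(p_mxp_m,\,p_mF_mp_m)$ for the element $x\in A$ itself. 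On the range of $p_m$ the weight $\phi_m(b)$ is only bounded below by $\mu/2$, not close to $1$; consequently $p_m\phi_m(bx)p_m$ and $p_mxp_m$ differ by a factor that is in no sense negligible, and no inequality you wrote bounds their distance by a quantity tending to zero with $\eta$ and $1/m$. The appeal to the Winter--Zacharias factorization does not repair this: the identity $\rho(bx)=\rho(b)\sigma(x)$ places $\sigma(x)$ in a multiplier algebra of the quotient $\prod F_m/\bigoplus F_m$, and nothing in your argument relates $\sigma(x)$ back to the concrete element $x$ sitting inside $A$ next to the projection $p_m\in F_m\subseteq A$. Even the natural attempt to exploit isometry of $\rho$, writing $\|[p_m,x]\|=\|\rho([p_m,x])\|\approx\limsup_k\|[p_m,\rho_k(x)]\|$, reduces you to commutation of $p_m$ against the images of $x$ in the \emph{other} algebras $F_k$, which your data do not control. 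This de-weighting/spectral-cut step is exactly where the real work of \cite{NW} lies, and your proposal leaves it open; as written it is an outline with a correct beginning, not a proof.
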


 \begin{theorem}{\rm (\cite{W}.)}\label{thm:2.20}
    	Let $A$ be a separable unital   $\rm C^{*}$-algebra such that $A$ has strict comparsion and   is $\mu$-${\rm TAF}$
    	for any $\mu\in(0,1)$. Then $TR(A)=0$.
    \end{theorem}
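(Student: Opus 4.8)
The plan is to reduce matters to the two features that separate $TR(A)=0$ (Definition \ref{def:2.2}) from the $\mu$-${\rm TAF}$ property (Definition \ref{def:2.8}). Comparing the two definitions, the almost-commutation estimate $\|px-xp\|<\varepsilon$ and the approximate containment $pxp\in_{\varepsilon}B$ are produced verbatim by $\mu$-${\rm TAF}$, taking $B$ to be the finite-dimensional subalgebra and $p=1_{B}$ the projection it supplies. The only genuinely new requirement in Definition \ref{def:2.2} is the comparison estimate $1-p\precsim a$, and the entire content of the argument will be to manufacture it out of the tracial largeness $\tau(p)>\mu$ combined with strict comparison (Definition \ref{def:2.5}).

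First I would fix $\varepsilon>0$, a finite subset $F\subseteq A$, and a non-zero $a\geq 0$, and extract a uniform positive lower bound on the dimension functions of $a$. Since $A$ is simple and unital, every $\tau\in T(A)$ is faithful, so $d_{\tau}(a)=\sup_{n}\tau(a^{1/n})>0$ for each $\tau$. Moreover $\tau\mapsto d_{\tau}(a)$ is a supremum of weak-$*$ continuous functions, hence lower semicontinuous, and $T(A)$ is weak-$*$ compact because $A$ is unital. A lower semicontinuous function attains its infimum on a compact space, so $\gamma:=\inf_{\tau\in T(A)}d_{\tau}(a)$ is attained and therefore strictly positive.

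Next I would choose the parameter and invoke the hypothesis. Pick any $\mu\in(1-\gamma,\,1)$ and apply the $\mu$-${\rm TAF}$ property to the finite set $F$ with tolerance $\varepsilon$. This yields a finite-dimensional subalgebra $B\subseteq A$ with projection $p=1_{B}$ satisfying $\|px-xp\|<\varepsilon$ and ${\rm dist}(pxp,B)<\varepsilon$ for all $x\in F$, together with $\tau(p)>\mu$ for every $\tau\in T(A)$; in particular $p\neq 0$ since $\mu>0$. Because $1-p$ is a projection, $d_{\tau}(1-p)=\tau(1-p)=1-\tau(p)<1-\mu<\gamma\leq d_{\tau}(a)$ for all $\tau\in T(A)$. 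Strict comparison then gives $1-p\precsim a$, which is exactly the remaining condition of Definition \ref{def:2.2}. As $\varepsilon$, $F$, and $a$ were arbitrary, this establishes $TR(A)=0$.

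The step I expect to require the most care is the uniform positivity of $\gamma$: one must remember that $d_{\tau}(a)$ is only lower semicontinuous, not continuous, in $\tau$, so the argument relies on the fact that a lower semicontinuous function on the compact space $T(A)$ attains its minimum, and that this minimum is positive by faithfulness of tracial states on a simple unital algebra. Once $\gamma>0$ is secured, the choice $\mu>1-\gamma$ converts the tracial bound coming from $\mu$-${\rm TAF}$ into the strict inequality of dimension functions, and strict comparison completes the proof with no further estimation.
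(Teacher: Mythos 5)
Your proof is correct, but there is essentially nothing in the paper to compare it against: Theorem \ref{thm:2.20} is stated as a quoted result from \cite{W}, and the paper supplies no proof of it. Your argument is a sound, self-contained derivation. The only condition of Definition \ref{def:2.2} not delivered verbatim by Definition \ref{def:2.8} is $1-p\precsim a$, and you manufacture it correctly: $\tau\mapsto d_\tau(a)$ is a supremum of weak-$*$ continuous functions (after normalizing $\|a\|\le 1$ so that $\tau(a^{1/n})$ is increasing, or using $\tau(f_n(a))$ with $f_n(t)=\min\{1,nt\}$), hence lower semicontinuous, so it attains its pointwise-positive infimum $\gamma$ on the weak-$*$ compact set $T(A)$; then any $\mu\in(\max\{0,1-\gamma\},1)$ gives $d_\tau(1-p)=1-\tau(p)<1-\mu<\gamma\le d_\tau(a)$ for every $\tau$, and strict comparison yields $1-p\precsim a$. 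Two caveats worth recording. First, the theorem as printed omits the word ``simple'' and the hypothesis $T(A)\neq\varnothing$, both of which your argument uses (faithfulness of every tracial state needs simplicity); these are implicit because Definitions \ref{def:2.5} and \ref{def:2.8} are only formulated for simple algebras with nonempty trace space, so this is a defect of the quoted statement rather than of your proof. Second, condition $(3)$ of Definition \ref{def:2.8} reads $\tau(a)>\mu$, an evident typo for $\tau(p)>\mu$, which you correctly read through. Note also that your argument never uses separability, so it proves a slightly more general statement than the one quoted.
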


 \begin{theorem}{\rm (\cite{NW}.)}\label{thm:2.21}
    	Let $A$ be a separable unital $a{\rm TAF}$ $\rm C^{*}$-algebra such that $A$ has the strict comparison property  and has the $\mu$-$oz{\rm LLP}$
    	for any $\mu\in(0,1)$. Then $TR(A)=0$.
    \end{theorem}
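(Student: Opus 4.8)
The plan is to reduce the theorem to an application of Theorem~\ref{thm:2.20}. Since $A$ is assumed to have strict comparison, it suffices to show that $A$ is $\mu$-${\rm TAF}$ for every $\mu\in(0,1)$; granting this, Theorem~\ref{thm:2.20} yields $TR(A)=0$ at once. First I would use the $a{\rm TAF}$ and order-zero lifting hypotheses to obtain this property for a single fixed positive trace constant. Because $A$ is $a{\rm TAF}$ and has the $\mu$-$oz{\rm LLP}$ for every $\mu\in(0,1)$, Theorem~\ref{thm:2.19} applies for each such $\mu$ and shows that $A$ is $\tfrac{\mu}{2}$-${\rm TAF}$; letting $\mu$ range over $(0,1)$, this gives that $A$ is $\nu$-${\rm TAF}$ for every $\nu\in(0,\tfrac12)$, and in particular for some fixed $\nu>0$.

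The core of the argument is then to bootstrap this single constant up to an arbitrary $\mu<1$ by orthogonal stacking inside corners. Fix a finite set $\mathcal F\subseteq A$, a tolerance $\varepsilon>0$, and a target $\mu\in(0,1)$, and choose $n\in{\mathbb N}$ with $(1-\nu)^n<1-\mu$. Applying the $\nu$-${\rm TAF}$ property produces a finite-dimensional $F_1\subseteq A$ with unit projection $p_1$ that almost commutes with $\mathcal F$, compresses $\mathcal F$ into $F_1$, and satisfies $\tau(p_1)>\nu$ for all $\tau\in T(A)$. Passing to the corner $(1-p_1)A(1-p_1)$, which is again simple and unital, and applying the property there to the compressed set $\{(1-p_1)x(1-p_1):x\in\mathcal F\}$, I would obtain a second finite-dimensional subalgebra $F_2$ with unit projection $p_2\leq 1-p_1$ and $\tau(p_2)>\nu\,\tau(1-p_1)$ for all $\tau$. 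Iterating $n$ times yields mutually orthogonal projections $p_1,\dots,p_n$ and finite-dimensional subalgebras $F_1,\dots,F_n$; putting $p=\sum_i p_i$ and $F=\bigoplus_i F_i$, the geometric estimate $\tau(1-p)<(1-\nu)^n<1-\mu$ holds for all $\tau\in T(A)$, while $p$ almost commutes with $\mathcal F$ and $pxp\in_{\varepsilon}F$. Since $\mu\in(0,1)$ was arbitrary, this establishes that $A$ is $\mu$-${\rm TAF}$ for all $\mu\in(0,1)$.

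The hard part will be the stacking step, not the final composition. Two things need care: first, that each corner $(1-p_1-\cdots-p_{k})A(1-p_1-\cdots-p_{k})$ genuinely inherits enough of the $\nu$-${\rm TAF}$ structure for the induction to continue --- here the simplicity built into the $a{\rm TAF}$ hypothesis together with strict comparison are what keep the successive corners in the right class and identify their traces with the (normalized) restrictions of traces on $A$; second, that the approximate commutation and approximation errors, which accumulate over the $n$ orthogonal stages, can be controlled by shrinking the tolerances geometrically so that the combined projection $p$ and algebra $F=\bigoplus_i F_i$ still meet conditions~(1) and~(2) of Definition~\ref{def:2.8} within the prescribed $\varepsilon$. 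Once these points are settled the trace bound is immediate from the geometric estimate, and with $\mu$-${\rm TAF}$ available for every $\mu\in(0,1)$, Theorem~\ref{thm:2.20} gives $TR(A)=0$, which would complete the argument.
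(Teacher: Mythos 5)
Your overall strategy---reduce to Theorem~\ref{thm:2.20} by upgrading to $\mu$-${\rm TAF}$ for every $\mu\in(0,1)$---is the right frame, and you correctly spotted the real issue that any proof must address: Theorem~\ref{thm:2.19} only delivers $\nu$-${\rm TAF}$ for $\nu\in(0,\tfrac12)$, while Theorem~\ref{thm:2.20} demands all $\mu\in(0,1)$, so some genuine bootstrapping is required. (Note that the paper itself gives no argument for this statement; Theorem~\ref{thm:2.21} is quoted from Watson's thesis \cite{NW}, so the only internal route, composing Theorems~\ref{thm:2.19} and~\ref{thm:2.20}, has exactly the mismatch you identified.)

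The gap is in your stacking step, and it is not a bookkeeping matter: the $\nu$-${\rm TAF}$ property does not pass to corners in the way your induction needs. Write $q=1-p_1-\cdots-p_k$. Applying $\nu$-${\rm TAF}$ of $A$ to a finite set drawn from $qAq$ (even including $q$ itself) gives a finite-dimensional $F$ whose unit $P$ satisfies $\tau(P)>\nu$ globally, but nothing forces $P$ to meet the corner: $P$ can be exactly orthogonal to $q$, in which case $P$ commutes with $q$, compresses every element of $qAq$ to $0\in F$, and still satisfies $\tau(P)>\nu$ (this is consistent precisely when $\tau(q)<1-\nu$, which is exactly the regime your corners live in after the first stage). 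So all conditions of Definition~\ref{def:2.8} can hold while giving no information inside $qAq$; a fixed trace bound $\nu<1$ simply does not localize, and ``simplicity plus strict comparison'' cannot manufacture $\sigma$-largeness of anything in $T(qAq)$ out of it. What \emph{does} localize is the $a{\rm TAF}$ bound, because there the positive element $a$ has $\tau(a)>1-\varepsilon$, and then $\tau(qaq)=\tau(q)-\tau\bigl((1-a)^{1/2}q(1-a)^{1/2}\bigr)\geq\tau(q)-\tau(1-a)>\tau(q)-\varepsilon$, which after normalizing is close to $1$ once $\varepsilon\ll\inf_{\tau}\tau(q)$. The correct repair is therefore a corner-inheritance lemma at the level of the hypotheses: show that $qAq$ is again separable, unital, $a{\rm TAF}$, has the $\mu$-$oz{\rm LLP}$ for all $\mu$, and has strict comparison, and then re-apply Theorem~\ref{thm:2.19} \emph{inside each corner} to recover $\nu$-${\rm TAF}$ there before stacking. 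That lemma (particularly the $a{\rm TAF}$ and $oz{\rm LLP}$ parts, which need nontrivial perturbation arguments since the finite-dimensional algebra produced by $a{\rm TAF}$ need not commute with $q$) is the actual mathematical content of the bootstrap, and your proposal neither states nor proves it.
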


	\section{The main results}

    \begin{theorem}\label{thm:3.1}
    	Let $A$ be an infinite-dimensional  unital simple $\rm C^{*}$-algebra. Let $B\subseteq A$ be a centrally large
    	subalgebra of $A$ such that $B$ has tracial topological rank zero. Let $\Omega$ be a class of finite dimension $\rm C^{*}$-algebra. Then
    	$A\in {\rm WTA}\Omega$.
        \end{theorem}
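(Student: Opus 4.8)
The plan is to assemble the data witnessing $A\in\mathrm{WTA}\Omega$ (Definition~\ref{def:2.4}) from two ingredients: the cut-down element $g_0$ produced by central largeness of $B$ (Definition~\ref{def:2.3}), and the finite-dimensional tracial approximation inside $B$ produced by $TR(B)=0$ (Definition~\ref{def:2.2}). Fix $\varepsilon>0$, a finite set $F\subseteq A$ of contractions closed under adjoints and containing a given $a\geq 0$ with $\|a\|=1$. Since $A,B$ are simple and unital, $T(A),T(B)$ are weak$^*$-compact and $\tau\mapsto d_\tau(\cdot)$ is lower semicontinuous, so $\delta_a:=\min_{\tau\in T(A)}d_\tau(a)>0$. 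First I would fix tolerances $\varepsilon_1\gg\varepsilon_2>0$ and a radius $\eta>0$ (to be pinned down last), and choose non-zero $y,z\in B_+$ with $\sup_\tau d_\tau(y),\ \sup_\tau d_\tau(z)$ a prescribed small fraction of $\delta_a$; such test elements exist since $TR(B)=0$ provides projections of uniformly small trace.

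Next I would apply Definition~\ref{def:2.3} to $F$, with tolerance $\varepsilon_1$, with $x=a$, and with the test element $y$, obtaining $g_0\in B$, $0\leq g_0\leq1$, and contractions $c_j\approx_{\varepsilon_1}a_j$ such that, writing $h:=1-g_0$, the containments $hc_j,\ c_jh\in B$ hold (the second by also feeding in the adjoints), together with $g_0\precsim_B y$, $\|hah\|>1-\varepsilon_1$, and $\|[g_0,a_j]\|<\varepsilon_1$; this last estimate says precisely that $h$ almost commutes with every $a_j$. I would then form $b_0\in B_+$ with $\|b_0-hah\|<2\varepsilon_1$ (the positive part of the self-adjoint part of $(1-g_0)c_0(1-g_0)$, where $c_0\approx a$), so $\|b_0\|>1-3\varepsilon_1$, and, using real rank zero of $B$, a non-zero projection $r\in B$ under a top spectral bump of $b_0$, so that $rb_0r\geq(\|b_0\|-\eta)r$; set $\delta_r:=\min_\tau\tau(r)>0$ and arrange at the outset that $\sup_\tau d_\tau(z)<\delta_r$. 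Now apply Definition~\ref{def:2.2} inside $B$ to $G=\{g_0,h,b_0,r\}\cup\{hc_j,c_jh:j\}$ with tolerance $\varepsilon_2$ and test element $z$, obtaining a projection $q\in B$ and finite-dimensional $D\subseteq B$ with $1_D=q$, $\|[q,w]\|<\varepsilon_2$ and $qwq\in_{\varepsilon_2}D$ for $w\in G$, and $1_B-q\precsim_B z$. Finally set $p:=q$ and let $g\in D_+$ be a contraction with $\|g-qg_0q\|<\varepsilon_2'$. Since $1_B=1_A$, the triple $(p,g,D)$ with $D\in\Omega$ is the candidate WTA datum.

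The verification rests on the norm approximations $p-g=q-g\approx q-qg_0q=qhq$ and $qhq\approx qh\approx hq$ (each with error $O(\varepsilon_2)$, since $qh(1-q)\approx hq(1-q)=0$). For condition~(2) I would expand $[qhq,a_j]$: replacing $a_j$ by $c_j$, using $qhq\approx qh$ on one side and $qhq\approx hq$ on the other, then using $hc_j,c_jh\in B$ to commute $q$ past them and $[h,c_j]\approx0$ to swap $hc_j\leftrightarrow c_jh$, one finds $qhq\,a_j\approx hc_jq\approx c_jhq\approx a_j\,qhq$, so the commutator is $O(\varepsilon_1+\varepsilon_2)$; crucially no commutator of $q$ with a general element of $A$ is ever needed. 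The same bookkeeping gives condition~(1): $(p-g)a_j\approx hc_jq\approx q(hc_j)q\in_{\varepsilon_2}D$ and $a_j(p-g)\approx q(c_jh)q\in_{\varepsilon_2}D$. For condition~(3), $1-(p-g)=(1_B-q)+g$ with $1_B-q\precsim_B z$ and $(g-\varepsilon_2')_+\precsim qg_0q\precsim g_0\precsim_B y$, so $d_\tau\big(1-(p-g)\big)\leq d_\tau(z)+d_\tau(y)<\delta_a\leq d_\tau(a)$ for all $\tau\in T(A)$; since $A$ has strict comparison (Theorem~\ref{thm:2.10} for $B$, transported to $A$ by Theorem~\ref{thm:2.11}), this yields $1-(p-g)\precsim a$.

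The step I expect to be the main obstacle is condition~(4), the norm lower bound $\|(p-g)a(p-g)\|\geq\|a\|-\varepsilon$, because Cuntz-smallness of $1_B-q$ controls traces but not norms. From the approximations above, $(p-g)a(p-g)\approx q(hah)q\approx qb_0q$, so it suffices to show $\|qb_0q\|\geq\|b_0\|-\eta-o(1)$. Here I would exploit that $q$ almost commutes with the projection $r\in G$: perturbing $q,r$ to genuinely commuting projections $q',r'$ (valid for $\varepsilon_2$ small), the trace estimate $\tau(q'\wedge r')\geq\tau(r')-\tau(1-q')\gtrsim\delta_r-d_\tau(z)>0$ forces $q'\wedge r'\neq0$; since $q'\wedge r'\leq r'$ lies under a top spectral projection of $b_0$, one gets $\|qb_0q\|\geq(\|b_0\|-\eta)-O(\varepsilon_2)\geq1-\varepsilon$. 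This projection-overlap argument, driven by strict comparison and by $\|[q,r]\|<\varepsilon_2$, is exactly what rescues a norm bound from a purely tracial hypothesis; choosing $\varepsilon_1,\varepsilon_2,\eta,\delta_r$ in the correct dependent order is the only genuinely delicate point of the proof.
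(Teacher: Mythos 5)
Your overall architecture is the same as the paper's: first apply central largeness of $B$ to the finite set to produce $g'$ (your $g_0$) and elements $c_j$ with $(1-g')c_j\in B$, then apply $TR(B)=0$ inside $B$ to a finite set built from $g'$ and the $(1-g')c_j$ to get the projection $p=q$ and the finite-dimensional $D$, and finally take $g$ to be an element of $D$ close to $pg'p$. The verifications of conditions (1) and (2) are essentially identical to the paper's. Where you genuinely diverge is in conditions (3) and (4). For (3) the paper uses an elementary orthogonality trick: it first chooses orthogonal norm-one positive elements $b_1\perp b_2$ with $b_1\sim b_2$ and $b_1+b_2\precsim b$ inside the hereditary subalgebra generated by $b$, feeds $b_1$ to the centrally large condition (so $g'\precsim_A b_1$) and $b_2$ to the tracial-rank-zero condition (so $1-p\precsim b_2$), and concludes $1-(p-g)\precsim g'\oplus(1-p)\precsim b_1+b_2\precsim b$ with no reference to traces. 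You instead route (3) through $d_\tau$ estimates and strict comparison of $A$ (via Theorems \ref{thm:2.10} and \ref{thm:2.11}). This works in the setting where the theorem is actually applied later, but it imports hypotheses the statement of Theorem \ref{thm:3.1} does not grant: Theorem \ref{thm:2.10} requires $A$ (in fact $B$) separable, and Definition \ref{def:2.5} presupposes exactness and $T(A)\neq\varnothing$; none of these is assumed here. There is also a small technical wrinkle in your step $(g-\varepsilon_2')_+\precsim qg_0q$: for the comparison $d_\tau(1-(p-g))\le d_\tau(1-q)+d_\tau(g)$ you need to control $d_\tau(g)$ itself, not of its cut-down, so you should choose $g\in D$ by functional calculus so that its support projection is dominated appropriately. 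I would strongly recommend replacing your (3) by the orthogonal-pair argument, which is both more elementary and valid in the stated generality.

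For condition (4) your projection-overlap argument (cutting a spectral projection $r$ out of an approximant of $(1-g')b(1-g')$ in $B$, then showing $q$ and $r$ have nonzero overlap because $1-q$ is tracially small) is more elaborate than, but compatible with, the paper's terse norm estimate; indeed the paper simply asserts $\|(p-g)b(p-g)\|\ge\|pbp\|-3\delta\ge\|b\|-3\varepsilon$, which as written leans on a norm-preservation clause of the tracial rank zero approximation (compare its condition $(4'')$) rather than on any overlap argument. Your version again needs $T(B)\neq\varnothing$ and faithfulness of traces (true for simple unital $TR$-zero algebras, but worth saying), and needs the perturbation of $q,r$ to commuting projections spelled out. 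So: correct skeleton, correct (1) and (2), but your (3) as written proves a weaker theorem than the one stated, and both (3) and (4) can be done more cheaply by exploiting the freedom to choose the positive elements fed into Definitions \ref{def:2.2} and \ref{def:2.3}.
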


    \begin{proof}
    	We must show that for any $\varepsilon>0$,
    	any finite subset $F=\{a_{1}, a_{2}, \cdots,$ $a_{m}\}$ $\subseteq A$, and non-zero element $b\geq 0$, there exist a projection $p\in A$, an element
    	$g\in A$, $0\leq g\leq1$, and a unital $\rm C^{*}$-algebra $D$ of $A$ with $g\in D$, $1_{D}=p$ and $D\in \Omega$ such that
    	
    	$(1)$ $(p-g)x\in_{\varepsilon}D$, $x(p-g)\in_{\varepsilon}D$ for all $x\in F$,
    	
    	$(2)$ $\|(p-g)x-x(p-g)\|<\varepsilon$ for all $x\in F$,
    	
    	$(3)$ $1-(p-g)\precsim b$, and
    	
    	$(4)$ $\|(p-g)b(p-g)\|\geq\|b\|-\varepsilon$.
    	
    	Since $A$ is an infinite-dimensional simple unital $\rm C^{*}$-algebra, there exist positive elements
    	$b_{1}, b_{2}\in A_{+}$ of norm one such that $b_{1}b_{2}=0, b_{1}\sim b_{2}$ and $b_{1}+b_{2}\precsim b$.
    	
    	Given $<0\delta<\varepsilon$, since $B$ is a centrally large  subalgebra of $A$, then  for every $m\in {\mathbb{N}}$, $a_{1}, a_{2}, \cdots, a_{m}\in A$, $\varepsilon>0$, $b_{1}, b_{2}\in A_{+}$,
    	 $y\in B_{+}\backslash\{0\}$, there are $c_{1}, c_{2}, \cdots, c_{m}\in A$ and $g'\in B$ such that
    	
    	$(1')$ $0\leq g'\leq1$,
    	
    	$(2')$  $\|c_{j}-a_{j}\|<\delta$ for $j=1, 2, \cdots, m$,
    	
    	$(3')$ $(1-g')c_{j}\in B$ for $j=1, 2, \cdots, m$,
    	
    	$(4')$ $g\precsim_{B}y$ and $g\precsim_{A}b_{1}\sim b_{2}$,
    	
    	$(5')$ $\|(1-g')b_{1}(1-g')\|>1-\delta; \|(1-g')b_{2}(1-g')\|>1-\delta$, and
    	
    	$(6')$ $\|g'a_{j}-a_{j}g'\|<\delta$ for $j=1, 2, \cdots, m$.
    	
    	Let $G=\{(1-g')c_{1}, (1-g')c_{2}, \cdots, (1-g')c_{m}, g'\}\subseteq B$. Since
    	$TR(B)=0$, for $\delta$, $G$ and $b_{2}\in A_{+}$, there exist a non-zero projection $p\in A$ and a $\rm C^{*}$-subalgebra
    	$D\in \Omega$ with $1_{D}=p$ such that
    	
    	$(1'')$ $\|p(1-g')c_{j}-p(1-g')c_{j}p\|<\delta$ for $j=1, 2, \cdots, m$ and $\|pg'-g'p\|<\delta$,
    	
    	$(2'')$ $p(1-g')c_{j}p\in_{\delta}D$ for $j=1, 2, \cdots, m$ and $pg'p\in_{\delta}D$,
    	
    	$(3'')$ $1-p\precsim b_{2}$, and

   	$(4'')$ $\|pg'p\|>1-\delta$.
    	
    	By $(2'')$, there exist $[(1-g')c_{j}]'\in D$ for $j=1, 2, \cdots, m$,
    such that $$\|[(1-g')c_{j}]'-p(1-g')c_{j}p\|<\delta,$$ and exists an element $g\in D$ such that
     $$\|g-pg'p\|<\delta.$$

    	We may assume that  $0\leq g\leq1$.
    	
    	By $(2')$ and $(6')$, we can get
    	
    	$\|(1-g')c_{j}-c_{j}(1-g')\|=\|g'c_{j}-c_{j}g'\|$
    	
    	$\leq\|g'c_{j}-g'a_{j}\|+\|g'a_{j}-a_{j}g'\|+\|a_{j}g'-c_{j}g'\|$
    	
    	$<3\delta<3\varepsilon$.
    	
    	By $(1'')$, we can get $\|(1-g')p-p(1-g')\|<\delta<\varepsilon$.
    	
     $(1)$    	
    	$\|(p-g)a_{j}-[(1-g')c_{j}]'\|$
    	
    	 $\leq\|(p-g)a_{j}-(p-pg'p)a_{j}\|$

    $+\|(p-pg'p)a_{j}-(p-pg'p)c_{j}\|$

    $+\|(p-pg'p)c_{j}-p(1-g')c_{j}p\|$
    	
    	$+\|[(1-g')c_{j}]'-p(1-g')c_{j}p\|$
    	
    	$<\varepsilon+\delta+\delta+\delta<4\varepsilon$ for $j=1, 2, \cdots, m$.
    	
    	Similarly, $a_{j}(p-g)\in_{4\varepsilon}D$ for $j=1, 2, \cdots, m$.
    	
    	 $(2)$    	
    	$\|(p-g)a_{j}-a_{j}(p-g)\|$
    	
    	 $\leq\|(p-g)a_{j}-(p-pg'p)a_{j}\|$

    $+\|p(1-g')a_{j}-p(1-g')c_{j}\|$

    $+\|p(1-g')c_{j}-(1-g')c_{j}p\|$
    	
    	$+\|(1-g')c_{j}p-c_{j}(1-g')p\|+\|c_{j}(1-g')p-a_{j}(1-g')p\|$

    $+\|a_{j}(1-g')p-a_{j}(p-g)\|$
    	
    	$<8\varepsilon$ for $j=1, 2, \cdots, m$.
    	
    	 $(3)$  $1-(p-g)=g+1-p\sim g+(1-p-\varepsilon)_+$

    $\precsim pg'p\oplus (1-p)\precsim g'\oplus (1-p)$

    $\precsim
    b_{1}+b_{2}\precsim b$.
    	
    	 $(4)$, $\|(p-g)b(p-g)\|\geq\|p(1-g')b(1-g')p\|-\delta\geq\|pbp\|-3\delta\geq\|b\|-3\varepsilon$.
    	
    	Therefore, by $(1)$, $(2)$, $(3)$, and $(4)$  one has $A\in {\rm WTA}\Omega$.
    \end{proof}

\begin{theorem}
    	Let $A$ be an infinite-dimensional separable unital simple $\rm C^{*}$-algebra such that $A$  has the $\mu$-$oz{\rm LLP}$ property
    	for any $\mu\in(0,1)$. Let $B\subseteq A$ be a centrally large
    	subalgebra of $A$ such that $TR(B)=0$. Then $TR(A)=0$.
         \end{theorem}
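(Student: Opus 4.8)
The plan is to deduce $TR(A)=0$ by verifying the hypotheses of Theorem~\ref{thm:2.21}: I will show that $A$ has strict comparison and is $a{\rm TAF}$, since $A$ is already separable, unital, and has the $\mu$-$oz{\rm LLP}$ for every $\mu\in(0,1)$ by assumption. For strict comparison I first apply Theorem~\ref{thm:2.10} to $B$, which is separable (as a subalgebra of the separable algebra $A$), simple, and unital: from $TR(B)=0$ it follows that $B$ has strict comparison. Since $B$ is centrally large in $A$, Theorem~\ref{thm:2.11} then upgrades this to strict comparison for $A$. Moreover, by Theorem~\ref{thm:2.16} the algebra $A$ has stable rank one, hence is stably finite, so that $T(A)\neq\varnothing$ and the $a{\rm TAF}$ and $oz{\rm LLP}$ conditions are meaningful for $A$.

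The heart of the argument is to show that $A$ is $a{\rm TAF}$, and for this I will use that $A\in{\rm WTA}{\mathbb{F}}$ by Theorem~\ref{thm:3.1} (taking $\Omega={\mathbb{F}}$). Fix a finite set $\mathcal{F}\subseteq A$ of contractions and $\varepsilon>0$. Before invoking ${\rm WTA}{\mathbb{F}}$, I choose a nonzero positive element $b\in A$ with $\sup_{\tau\in T(A)}d_{\tau}(b)<\varepsilon$; such an element exists because $A$ is simple and infinite dimensional, so for any $N$ one can find $N$ mutually orthogonal, mutually Cuntz-equivalent positive contractions, whence each has $d_{\tau}\le 1/N$ uniformly in $\tau$ (the same construction already used at the start of the proof of Theorem~\ref{thm:3.1}). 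Applying Definition~\ref{def:2.4} to $\mathcal{F}$, a small tolerance $\delta>0$ to be fixed later, and this $b$, I obtain a projection $p$, an element $g$ with $0\le g\le 1$ lying in a finite-dimensional $D\subseteq A$ with $1_{D}=p$ and $g\in D$, satisfying $(1)$--$(4)$ of Definition~\ref{def:2.4}. Set $a:=p-g$. Since $g\in D$, $1_{D}=p$ and $\|g\|\le 1$, one has $g\le \|g\|1_{D}\le p$, so $a=p-g$ is a positive contraction lying in $D$; I take $D$ as the finite-dimensional subalgebra and $a$ as the positive element witnessing $a{\rm TAF}$.

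It remains to verify the three conditions of Definition~\ref{def:2.6} for $(D,a)$. Condition $(1)$, $\|ax-xa\|<\varepsilon$, is immediate from condition $(2)$ of Definition~\ref{def:2.4} once $\delta<\varepsilon$. Condition $(3)$, $\tau(a)>1-\varepsilon$, follows from condition $(3)$ of Definition~\ref{def:2.4}: since $1-a=1-(p-g)\precsim b$, we get $\tau(1-a)\le d_{\tau}(1-a)\le d_{\tau}(b)<\varepsilon$ for every $\tau\in T(A)$, hence $\tau(a)>1-\varepsilon$. The delicate point is condition $(2)$, $\operatorname{dist}(a^{1/2}xa^{1/2},D)<\varepsilon$, because ${\rm WTA}{\mathbb{F}}$ supplies the one-sided approximations $ax\in_{\delta}D$ and $xa\in_{\delta}D$ rather than the symmetric cut-down. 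Here I will use approximate commutation: approximating $t\mapsto t^{1/2}$ uniformly on $[0,1]$ by polynomials and expanding commutators shows that $\|[a,x]\|<\delta$ forces $\|[a^{1/2},x]\|<\eta(\delta)$ with $\eta(\delta)\to0$ as $\delta\to0$. Consequently $a^{1/2}xa^{1/2}=(a^{1/2}x)a^{1/2}\approx_{\eta}xa^{1/2}a^{1/2}=xa$, and since $xa=x(p-g)\in_{\delta}D$ we conclude $\operatorname{dist}(a^{1/2}xa^{1/2},D)<\eta(\delta)+\delta$. Choosing $\delta$ small enough that $\eta(\delta)+\delta<\varepsilon$ finishes the verification, so $A$ is $a{\rm TAF}$.

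With $A$ separable, unital, $a{\rm TAF}$, of strict comparison, and possessing the $\mu$-$oz{\rm LLP}$ for every $\mu\in(0,1)$, Theorem~\ref{thm:2.21} then yields $TR(A)=0$. I expect the main obstacle to be precisely the passage ${\rm WTA}{\mathbb{F}}\Rightarrow a{\rm TAF}$: reconciling the one-sided ``$ax,\ xa$ near $D$'' conditions with the symmetric ``$a^{1/2}xa^{1/2}$ near $D$'' condition of Definition~\ref{def:2.6}, and extracting the uniform trace lower bound $\tau(a)>1-\varepsilon$ from the Cuntz subequivalence $1-(p-g)\precsim b$. The commutator/functional-calculus estimate for the square root and the uniform smallness of $d_{\tau}(b)$ are what make these two points work, while everything else is routine $\varepsilon$--$\delta$ bookkeeping combined with the cited theorems.
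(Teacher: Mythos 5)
Your proof is correct and follows essentially the same route as the paper: strict comparison for $A$ via Theorems \ref{thm:2.10} and \ref{thm:2.11}, membership in ${\rm WTA}{\mathbb{F}}$ via Theorem \ref{thm:3.1}, and then the $a{\rm TAF}$/$oz{\rm LLP}$ machinery (your appeal to Theorem \ref{thm:2.21} is just the composite of the paper's Theorems \ref{thm:2.19} and \ref{thm:2.20}). The only difference is that the paper's two-line proof silently treats $A\in{\rm WTA}{\mathbb{F}}$ as sufficient input for Theorem \ref{thm:2.19}, whereas you explicitly supply the missing bridge ${\rm WTA}{\mathbb{F}}\Rightarrow a{\rm TAF}$ (choosing $b$ with $d_{\tau}(b)<\varepsilon$ uniformly, setting $a:=p-g$, and using the square-root commutator estimate to convert the one-sided approximations into $a^{1/2}xa^{1/2}\in_{\varepsilon}D$) --- a step you correctly identify as the real content and which the paper leaves implicit.
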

\begin{proof} By Theorem \ref{thm:2.10}, Theorem \ref{thm:2.11},  Theorem \ref{thm:3.1}, Theorem \ref{thm:2.19} and Theorem \ref{thm:2.20}.
\end{proof}

\begin{theorem}
    	Let $A$ be an infinite-dimensional separable unital  nuclear simple $\rm C^{*}$-algebra such that $A$ satisfy the $\rm UCT$.   Let $B\subseteq A$ be a centrally large subalgebra of $A$ such that $TR(B)=0$. Then $TR(A)=0$.
        \end{theorem}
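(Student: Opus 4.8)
The plan is to bypass the $\mu$-$oz\mathrm{LLP}$ route used for the preceding theorem and instead route the argument through the Toms--Winter machinery together with the decomposition-rank characterisation of tracial rank zero. First I would record that $TR(B)=0$ forces $B$ to be tracial $\mathcal{Z}$-stable by Theorem \ref{thm:2.12}. Since $B$ is centrally large in $A$, Theorem \ref{thm:2.13} then promotes this to genuine $\mathcal{Z}$-stability of the ambient algebra $A$; note that this single step already discharges the central-largeness hypothesis as far as $\mathcal{Z}$-stability is concerned, so no appeal to the weak tracial approximation of Theorem \ref{thm:3.1} is needed here. Because $A$ is separable, simple and unital, the equivalence in Theorem \ref{thm:2.15} upgrades $\mathcal{Z}$-stability to finite nuclear dimension.

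Next I would extract two further consequences of the central largeness of $B$ from Theorem \ref{thm:2.16}: namely that $A$ has stable rank one and that $A$ has real rank zero. Stable rank one passes to every matrix amplification $M_n(A)$ and forces finiteness, so $A$ is stably finite; I would keep the real rank zero conclusion in reserve for the final step. At this point $A$ is a separable, simple, unital, nuclear algebra with finite nuclear dimension that is stably finite and satisfies the $\mathrm{UCT}$, so Theorem \ref{thm:2.17} yields that the decomposition rank of $A$ is finite. Finally, combining finite decomposition rank with the real rank zero obtained above, Theorem \ref{thm:2.18} gives $TR(A)=0$, completing the proof.

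The delicate point in this chain is not any single estimate --- the argument is essentially a concatenation of the cited structural theorems --- but rather making sure that every hypothesis of Theorem \ref{thm:2.17} is genuinely met, and in particular the stable finiteness of $A$. This is the one fact that does not come for free from $\mathcal{Z}$-stability, since in the Toms--Winter picture the stably finite and purely infinite cases split; here finiteness is guaranteed by the stable rank one conclusion of Theorem \ref{thm:2.16}, which itself rests on the central largeness of $B$. The nuclearity and $\mathrm{UCT}$ hypotheses, absent from the $\mu$-$oz\mathrm{LLP}$ version, are precisely what make the decomposition-rank route through Theorems \ref{thm:2.17} and \ref{thm:2.18} available as a cleaner substitute for the $\mu$-$\mathrm{TAF}$ analysis.
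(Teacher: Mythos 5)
Your proof is correct and follows essentially the same chain as the paper, whose proof is literally the concatenation of Theorems \ref{thm:2.12}--\ref{thm:2.18} that you spell out. The only cosmetic difference is that you skip Theorem \ref{thm:2.14}: the result of \cite{AJN} actually yields tracial $\mathcal{Z}$-absorption of $A$, and it is Theorem \ref{thm:2.14} (using nuclearity) that upgrades this to genuine $\mathcal{Z}$-stability, so that step should be kept in the chain.
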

\begin{proof} By Theorem \ref{thm:2.12}, Theorem \ref{thm:2.13}, Theorem \ref{thm:2.14}, Theorem \ref{thm:2.15}, Theorem \ref{thm:2.16}, Theorem \ref{thm:2.17}, and Theorem \ref{thm:2.18}.
\end{proof}

We hope the following question is right.
\begin{Ques} Let $\Omega$ be  the class of finite dimensional  $\rm C^*$-algebras. Let $A\in {\rm WTA}\Omega$ be an infinite-dimensional separable unital simple $\rm C^*$-algebra such that $A$ has  stable rank one, real rank zero and has strict comparison. Does  $A$ has tracial topological rank zero?

\end{Ques}

	\end{document}